\documentclass[12pt]{amsart}
\usepackage[hmargin=1in,vmargin=1in]{geometry}
\usepackage{amsmath,amsfonts,amssymb}
\usepackage{amsthm}

\usepackage{color} 
\usepackage{enumerate,graphics, graphicx}
\usepackage{enumitem}
\usepackage{tikz}
\usepackage[all,cmtip]{xy}
\usetikzlibrary{arrows}
\usepackage[pdftex,bookmarks=true]{hyperref}
\usepackage{verbatim}
\usepackage{caption}
\usepackage{subcaption} 

\newtheorem{thm}{Theorem}[section]
\newtheorem{lem}[thm]{Lemma}

\newtheorem{prop}[thm]{Proposition}
\newtheorem{cor}[thm]{Corollary}

\theoremstyle{definition}
\newtheorem{defn}[thm]{Definition}

\newtheorem{eg}[thm]{Example}

\newtheorem{rem}[thm]{Remark}

\newtheorem*{prop_lcr_diff1}{Proposition \ref{prop:LCR_diff_one}}
\newtheorem*{prop_max_deg_lcr}{Proposition \ref{prop:max_deg_lcr}}

\usepackage{circuitikz}
\usepackage{float}

\newcommand{\shape}{\mathrm{shape}}

\def\deg{\text{deg}}

\def\C{\mathbb{C}}

\def\R{\mathbb{R}}
\def\P{\mathbb{P}}

\usepackage{mathabx}

\newcommand{\bfc}{\mathbf{c}}
\newcommand{\bfd}{\mathbf{d}}

\newcommand{\bfR}{\mathbf{R}}
\newcommand{\bfC}{\mathbf{C}}
\newcommand{\bfL}{\mathbf{L}}

\title[Identifiability of Series-Parallel LCR Systems]{Structural Identifiability of Series-Parallel LCR Systems}

\author[Bortner]{Cashous Bortner}
\address{North Carolina State University}
\author[Sullivant]{Seth Sullivant}
\address{North Carolina State University}


\begin{document}

\begin{abstract}
We consider the identifiability problem for the parameters of 
series-parallel LCR circuit networks.  We prove that 
for networks with only two classes of components
 (inductor-capacitor (LC), inductor-resistor (LR), 
and capacitor-resistor (RC)), the parameters are identifiable if and only 
if the number of non-monic coefficients of the constitutive equations 
equals the number of parameters.  The notion of the ``type'' of the constitutive equations
plays a key role in the identifiability of LC, LR, and RC networks.
We also investigate the general series-parallel LCR circuits (with all three
classes of components), and classify the types of constitutive equations
that can arise, showing that there are $22$ different types.  However,
 we produce an example that shows
that the basic notion of type that works to classify identifiability
of two class networks is not sufficient to classify the identifiability
of general series-parallel LCR circuits.
\end{abstract}

\maketitle

\section{Introduction} \label{sec:intro}

LCR circuits, also referred to as LCR systems or models, are electrical circuits consisting of 
networks of inductors, capacitors, and resistors, which we call \textit{base elements}.  
These circuits have a wide array of applications, most notably in communications systems, 
such as filters and tuners used in television and radio tuning \cite{wang_2010}.  
Also of interest are the circuits generated by two base element types, 
for example simple LR circuits can be made into high-pass (or low-pass) filters which pass
high frequencies through the circuit with minimal dampening, while low frequencies are not 
able to pass as a result of strong dampening \cite{huning_felix_2014}.  

Each of the base elements in an LCR system has a defining parameter which are referred to as the inductance ($L$), 
capacitance ($C$), and resistance ($R$) respectively.  The system as a whole also has measurable state variables
called the voltage ($V$) and the current ($I$).  
A natural question emerging from the study of 
LCR systems is whether or not we can determine the parameter values of each of the base 
elements given the measurements of the voltage and current over time over the whole system, 
and in particular if we can do so uniquely.  

\textit{Structural identifiability}  is the study of which parameters of a model  can be determined uniquely
from its input and output dynamics.  If we are able to determine all of the model's 
parameters uniquely, we say that the model is \textit{globally identifiable}.  
If, however, we are only able to determine uniqueness of the parameters up to a finite number of values
(or, equivalently, in a small neighborhood of the original parameter values), 
we  say that the model is \textit{locally identifiable}.  If we cannot 
recover the parameters uniquely up to a finite number of values, the model is said to be \textit{unidentifiable}.  The study of the structural identifiability of models has been of interest since the work of Bellman and  \AA{}str\"om \cite{bellman_astrom_1970}.  Since then, several different systems ranging from physical to biological have been analyzed for identifiability using various techniques \cite{chis_banga_balsa_2011, little_heidenreich_li_2010, miao_xia_perelson_wu_2011, WALTER19811}.

In this paper, we study the structural identifiability of LCR circuits where the underlying
network of components is a \emph{series-parallel graph}.  In the cases where there is a series-parallel
LCR network that only involves two types of components (i.e.,  inductor-resistor systems, 
capacitor-resistor systems, or inductor-capacitor systems) we give a complete characterization of
when these models are identifiable.  In particular, we have the following theorem.

\begin{thm}
Let $\mathcal{N}$ be a series-parallel LCR network that involves only two types of components.
Then the network is locally identifiable if and only if the constitutive equation of the model
has as many non-monic coefficients as the there are parameters.
\end{thm}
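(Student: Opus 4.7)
The plan is to set up the input-output map from parameters to observable coefficients, and then prove the biconditional by comparing its generic rank to the number of parameters. For a series-parallel network $\mathcal{N}$ with parameter vector $\theta \in \R^n$, Kirchhoff's laws give a transfer function $Z(s,\theta) = N(s,\theta)/D(s,\theta)$ in the Laplace variable $s$. Clearing denominators and normalizing the leading coefficient to $1$ yields the constitutive equation, and the non-monic coefficients are precisely the rational functions of $\theta$ that are recoverable from input-output data. Local identifiability of $\mathcal{N}$ is then equivalent to the coefficient map $\varphi:\R^n \to \R^k$ (sending $\theta$ to the tuple of non-monic coefficients) being generically finite, which in turn is equivalent to $d\varphi$ having rank $n$ at a generic $\theta$. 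This immediately yields the forward direction: if $k<n$ then $\varphi$ cannot be generically finite, so local identifiability forces $k \geq n$; combined with the a priori bound $k \leq n$ from the earlier analysis of two-class networks (where each parameter contributes at most one independent coefficient), one gets $k=n$.

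For the reverse direction, I would induct on the series-parallel decomposition of $\mathcal{N}$. The base case is a single base element, for which the constitutive equation is trivially identifiable. For the inductive step, suppose $\mathcal{N}$ is obtained as a series or parallel composition of two smaller two-class networks $\mathcal{N}_1$ and $\mathcal{N}_2$ of the same two classes, with impedances $Z_i = N_i/D_i$. Then $Z$ is either $N_1 D_2 + N_2 D_1$ over $D_1 D_2$ (series) or $N_1 N_2$ over $N_1 D_2 + N_2 D_1$ (parallel). The hypothesis $k=n$ for $\mathcal{N}$ must propagate to the hypothesis $k_i = n_i$ for each $\mathcal{N}_i$, because any cancellation or monic normalization in the combined rational function that reduced the count for $\mathcal{N}_i$ would reduce it further for $\mathcal{N}$. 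Inductively, each smaller coefficient map $\varphi_i$ is generically finite, and one then argues that the composition/combination step preserves this: the new non-monic coefficients of $\mathcal{N}$ determine the coefficients of $\mathcal{N}_1$ and $\mathcal{N}_2$ up to a finite ambiguity, using polynomial factorization or partial-fraction decomposition appropriate to the two-class setting.

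The main obstacle is this last step in the inductive argument — showing that recovery of the combined numerator/denominator coefficients of $Z$ really does determine the smaller-network coefficients up to finitely many choices. This is where the restriction to \emph{two} classes of components is essential, since the type of the constitutive equation (in the paper's sense) controls how degrees of $N_i$ and $D_i$ interact and rules out the pathological cancellations that the abstract mentions can occur in the three-class case. Concretely, I would reduce this to showing that for each of the three two-class combinations (LC, LR, RC), the pair $(N_1 D_2 + N_2 D_1, D_1 D_2)$ can be recovered generically from its normalized form together with the degree pattern forced by the type, and then that $(N_1,D_1,N_2,D_2)$ can be recovered up to the combinatorially finite ambiguity coming from swapping branches in a parallel junction. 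Once this is established, chaining the generic finiteness across the recursion gives local identifiability of $\mathcal{N}$.
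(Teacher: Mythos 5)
Your architecture matches the paper's: the forward direction by a dimension count, and the reverse direction by induction on the series-parallel decomposition, reducing to the claim that the coefficients of the combination determine those of the two subnetworks up to finite ambiguity. But the step you flag as ``the main obstacle'' is the entire mathematical content of the theorem, and naming it is not proving it. The paper resolves it by formulating a \emph{shape factorization problem}: given generic $f = f_1f_3$ and $g = f_1f_4+f_2f_3$ with prescribed shapes, there are finitely many factorizations $f = f_1f_3$ (the easy part), and for each such choice the equation $g = f_1f_4+f_2f_3$ is a \emph{linear} system in the unknown coefficients of $f_2,f_4$ whose matrix is, after deleting the zero rows and columns forced by alternation, a Sylvester matrix of $f_1$ and $f_3$ possibly bordered by $1\times 1$ blocks with nonzero entries. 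Generic invertibility of a Sylvester matrix is the nonvanishing of a resultant, it holds precisely when the matrix is square, and squareness is shown by a direct dimension count to be equivalent to the coefficient count equalling the parameter count. Your appeal to ``polynomial factorization or partial-fraction decomposition'' does not substitute for this: partial fractions would recover the roots of $D_1D_2$, not the partition of those roots between $D_1$ and $D_2$ together with the matching numerators, and the finiteness of that recovery is exactly what the Sylvester-matrix rank argument certifies.

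Two further gaps. First, your a priori bound $k\le n$ is itself a theorem: for LC networks the naive count allows up to $2n+1$ non-monic coefficients, and the bound $k\le n$ requires proving that both sides of an LC constitutive equation \emph{alternate} (every other coefficient vanishes) and that the top degrees of the two sides differ by exactly one; the parenthetical ``each parameter contributes at most one independent coefficient'' is not an argument. (The paper only needs the new machinery for LC: it imports RL from the known viscoelastic result via the electromechanical analogy, and obtains RC from a projective duality swapping $L \leftrightarrow C$ and $V \leftrightarrow I$; your plan to treat all three cases uniformly is legitimate but means reproving the RL case from scratch.) Second, your propagation claim --- that $k=n$ for $\mathcal{N}$ forces $k_i=n_i$ for the subnetworks --- is asserted with a one-sentence heuristic about cancellation; in the paper this is the content of the type multiplication tables, which track how the four possible shapes of LC constitutive equations combine under series and parallel composition and show that the coefficient count of the combination reaches $n_1+n_2$ only for specific pairs of types. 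Without the alternation and type analysis, neither the bound nor the propagation is established.
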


We also give explicit combinatorial conditions on the series-parallel constructions that
guarantee local identifiability in two component type LCR models, which are summarized in
certain ``multiplication tables''.  In each case (LC, LR, or RC), there are 4 types of 
systems characterized by the shape of the constitutive equation, and identifiability of a series or
parallel combination is completely determined by which pairs of types are joined together.

We also begin the study of general series-parallel LCR circuits.  These turn out to be much more
complicated because there are LCR systems where the number of non-monic coefficients is larger than
the number of parameters.  As a result, in addition to the identifiability problem, there are also interesting
questions about the constraints on the coefficients that arise.  Our analysis of the structure of the
constitutive equations shows that for general series-parallel LCR systems, there are
$22$ different types of constitutive equations.  With that being said, it remains an open problem
to determine complete identifiability rules of general series-parallel LCR circuits.

Part of our motivation for pursuing this project comes from past work of Mahdi, Meshkat and the
second author \cite{mahdi_meshkat_sullivant_2014}, which characterized the identifiability of
series-parallel viscoelastic systems whose elements consists of springs and dashpots.  
The \textit{electromechanical analogy}, sometimes called the impedance analogy, 
shows that this is the same as studying identifiability of the RL systems.
We were interested in generalizing those results to the three component
systems whose study we begin in this paper.

The organization of this paper is as follows: Section \ref{sec:LCRoverview}
gives background on LCR systems.  Section \ref{sec:ident} defines identifiability
and introduces its study in the context of LCR systems.  Section \ref{sec:proj}
discusses the perspective of projective geometry for studying circuit models,
and uses this to prove a duality result.  Section \ref{sec:RL/RC}
describes results of the two-element systems containing only resistors and inductors, as well as the two-element systems containing only resistors and capacitors.  
Section \ref{sec:LC} presents results for the two-element systems containing only inductors and capacitors.  Section \ref{sec:LCR} describes results of the general LCR systems.  
Section \ref{sec:equations} introduces the problem of studying the equations that
define the vanishing ideal of an LCR circuit model.
Section \ref{sec:future} outlines some paths for future study.


\section{LCR Systems}\label{sec:LCRoverview}
The ideal resistor follows Ohm's law which describes a relationship between the voltage ($V$) across the resistor, and the current ($I$).  In the case of the resistor, the voltage and current are proportional with constant of proportionality $R$ which is referred to as the \textit{resistance}, which we write as:
\begin{equation}\label{eq:res}
V = R I.
\end{equation}
Similarly, the ideal inductor exhibits the following relationship between the voltage and the derivative with respect to time of the current:
\begin{equation}\label{eq:ind}
V = L \dot{I}
\end{equation}
where $L$ is called the \textit{inductance}.
The ideal capacitor is often considered the dual of the inductor, where the relationship between the time derivative of the voltage and current is described by
\begin{equation}\label{eq:cap}
\dot{V}=CI
\end{equation}
where $C$ is  the \textit{inverse capacitance}.  Note that we use $\dot{V}=CI$
instead of the more familiar $C\dot{V}=I$ for mathematical convenience. 
 For this reason, $C$ in this paper
is the inverse of the capacitance.  This change will not affect results of identifiability.

We call these equations relating the voltage and current of LCR systems \textit{constitutive equations}.  In general, we can use Kirchhoff's Current and Voltage Laws to generate a single constitutive equation of circuits consisting of parallel and series combinations of these three base elements. 

\begin{thm}[Kirchhoff's Current Law]   The algebraic sum of the currents entering any node is zero, 
i.e.~the net current flowing into and out of any node must be zero.
\end{thm}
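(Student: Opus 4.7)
The plan is to recognize that Kirchhoff's Current Law (KCL) is fundamentally a statement of conservation of electric charge specialized to the idealized setting of a circuit network. So rather than deriving it from deeper mathematical structure within this paper, I would present it as a consequence of two physical principles: (i) electric charge is conserved, and (ii) in the circuit model, a node is an idealized zero-dimensional point at which no charge may accumulate.

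First, I would fix a node $v$ of the network and an arbitrarily small region $U$ around $v$, and let $Q(t)$ denote the total charge contained in $U$ at time $t$. By conservation of charge, $\dot{Q}(t)$ equals the net rate at which charge flows into $U$ through its boundary, which is exactly the sum over all incident wires of the currents flowing into $U$ (with sign conventions fixed by an orientation). Second, I would invoke the modeling assumption that at an ideal node no charge is stored: the capacitive element is where charge storage occurs, and the node itself is lumped and dimensionless. This forces $Q(t) \equiv 0$ and hence $\dot{Q}(t) = 0$.

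Combining the two gives the algebraic sum of incoming currents at $v$ equal to zero, which is the statement of KCL. More formally, if $e_1,\dots,e_k$ are the edges incident to $v$ with chosen orientations, and $I_{e_i}(t)$ denotes the current along $e_i$ oriented into $v$, then the argument above yields
\begin{equation*}
\sum_{i=1}^{k} I_{e_i}(t) \;=\; \dot{Q}(t) \;=\; 0.
\end{equation*}

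The main obstacle, such as it is, is not mathematical but modeling: one must justify the lumped-element idealization that charge cannot accumulate at a node. In a rigorous derivation from Maxwell's equations, one would integrate the continuity equation $\nabla \cdot \mathbf{J} + \partial_t \rho = 0$ over a Gaussian pillbox around $v$ and pass to the quasi-static limit in which displacement currents and stored charge at the node vanish. Since this paper adopts the standard circuit-theoretic framework in which these idealizations are built into the definition of an LCR network, I would simply cite conservation of charge under the lumped-node assumption and regard KCL as an axiom of the model, presenting the argument above as motivation rather than as a derivation from anything more primitive in the paper.
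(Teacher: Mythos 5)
Your reading is correct: the paper states Kirchhoff's Current Law as a labeled theorem but offers no proof, treating it as a physical axiom of the lumped-circuit model from which the constitutive equations are then derived. Your charge-conservation motivation is sound and standard, and your closing assessment---that within this framework KCL should simply be taken as a modeling assumption rather than derived from anything more primitive in the paper---matches the paper's treatment exactly.
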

\begin{thm}[Kirchhoff's Voltage Law]
The algebraic sum of the voltages around any loop is zero.  
\end{thm}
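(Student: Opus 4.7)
The plan is to derive Kirchhoff's Voltage Law from the existence of a scalar potential on the nodes of the circuit, which is the standard quasistatic approximation underlying lumped-element circuit analysis. First, I would model the circuit as a directed multigraph $G = (V, E)$ where vertices represent circuit nodes and directed edges represent oriented branches (components), with each branch $e$ carrying a voltage $V_e$ measured in the direction of its orientation.

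The key physical input is that there exists a scalar potential $\phi \colon V \to \R$ on the nodes such that for each branch $e = (u, v)$ we have $V_e = \phi(u) - \phi(v)$. This is the quasistatic consequence of Maxwell's equations: when time-varying magnetic fluxes linking the loops are neglected (which is precisely the regime in which the lumped-element model is valid), the electric field is conservative, so the line integral of the field between two points is path-independent and defines a potential difference. Once such a $\phi$ is in hand, the rest of the argument is purely combinatorial.

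Given the node potentials, I would take any closed loop $v_1 \to v_2 \to \cdots \to v_k \to v_1$ in $G$ and note that the signed sum of voltages around the loop telescopes:
\begin{equation*}
\sum_{i=1}^{k} V_{v_i v_{i+1}} \;=\; \sum_{i=1}^{k} \bigl( \phi(v_i) - \phi(v_{i+1}) \bigr) \;=\; 0,
\end{equation*}
with indices taken modulo $k$. A branch traversed against its orientation contributes $-V_e$, which preserves the telescoping pattern. In algebraic-graph-theoretic language, the vector $V \in \R^E$ lies in the coboundary (cut) space of $G$, which is the orthogonal complement of the cycle space, and KVL is precisely the orthogonality statement $\langle V, c \rangle = 0$ for every cycle $c$.

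The main obstacle is philosophical rather than technical: the existence of the node potential $\phi$ is where the physics enters, and in a fully self-contained mathematical treatment one must either postulate it as an axiom of the lumped model or derive it from Faraday's law by discarding the $\partial B / \partial t$ contribution through the loop. Within the scope of this paper, I would take the existence of node potentials as the defining property of a lumped circuit, after which KVL reduces to the combinatorial telescoping identity above.
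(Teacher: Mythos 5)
The paper offers no proof of this statement at all: Kirchhoff's Voltage Law is stated in Section~\ref{sec:LCRoverview} as a physical law and used as an axiom of the lumped-circuit model, on the same footing as the constitutive equations $V=RI$, $V=L\dot I$, and $\dot V = CI$. Your derivation is correct given its premise, and it is the standard one: once you postulate a node potential $\phi$ with $V_e = \phi(u)-\phi(v)$ on each oriented branch, the signed sum around any loop telescopes to zero, and the cycle-space/cut-space orthogonality phrasing is an accurate restatement. You are also right to flag that the existence of $\phi$ is exactly where the physics enters --- it is equivalent to KVL itself in the lumped model, so the ``proof'' is really a reduction of one axiom to another rather than a derivation from nothing. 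For the purposes of this paper, which only uses KVL to derive the series and parallel combination rules of Propositions~\ref{prop:series_con} and~\ref{prop:para_con}, treating the law as a modeling postulate (as the authors do) is the appropriate level of rigor; your argument buys a cleaner separation between the physical input (conservative field, hence node potentials) and the purely combinatorial content (telescoping over a cycle), which could be useful if one wanted to set the whole theory up axiomatically on a graph.
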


\begin{eg}\label{ex:series}
Consider the series combination of a resistor and an inductor. 

\vspace*{5mm}
\begin{figure}[H]
\begin{circuitikz}
\draw 
(1,0) to (2,0) 
 to[L,a=$L$] (4,0) to[R,a=$R$] (6,0) to (7,0)
;\end{circuitikz}
\end{figure}

By Kirchhoff's Voltage Law, we get that the voltage over the whole system $V$ must be the sum of the voltages over each element in the system, i.e.~
\[
V = V_L+V_R=L\dot{I_L}+RI_R.
\]
Also, by Kirchhoff's Current Law, we know that the net current of the system must be equal to the current of each element, i.e.~$I_L=I_R=I$.  Therefore, we get that the constitutive equation describing this circuit is
\[
V=L \dot{I}+RI.
\]
\end{eg}

\begin{eg}\label{ex:para}
Now consider a parallel combination of a resistor and an inductor.

\begin{center}
\begin{circuitikz}
\draw 
(1,0) to (2,0) to (2,1)
 to[L,l=$L$] (5,1) to (5,0)   
(2,0) to (2,-1) to[R,a=$R$] (5,-1) to (5,0) to (6,0)
;\end{circuitikz}
\end{center}
\end{eg}

By Kirchhoff's voltage law, the sum of the voltage around the parallel loop must be zero, hence $V_L=V_R=V$.  Also, by Kirchhoff's current law, the current of the system is the sum of each of the currents, i.e.~$I=I_R+I_L$.

Taking the time based derivative of this current sum, along with the time based derivative of the resistor constitutive equation, we get
\[
\dot{I} = \dot{I_R}+ \dot{I_L}=\frac{1}{R} \dot{V_R} + \frac{1}{L} V_L.
\]
Thus, the constitutive equation the system is
\[
\dot{I} = \frac{1}{R} \dot{V} + \frac{1}{L} V.
\]

A natural question to ask is how we can generate these constitutive equations for more complex systems.  Suppose $S_1$ and $S_2$ represent two circuits with respective constitutive equations $f_1 V_1 = f_2 I_1$ and $f_3 V_2 = f_4 I_2$ where $f_i$ are all linear differential operators.  We can write these differential operators as 

\begin{align}
f_1 &= a_{n_1} \frac{d^{n_1}}{dt^{n_1}} + \cdots + a_{m_1} \frac{d^{m_1}}{dt^{m_1}} \nonumber \\
f_2 &= b_{n_2} \frac{d^{n_2}}{dt^{n_2}} + \cdots + b_{m_2} \frac{d^{m_2}}{dt^{m_2}} \label{eq:fourfs} \\
f_3 &= c_{n_3} \frac{d^{n_3}}{dt^{n_3}} + \cdots + c_{m_3} \frac{d^{m_3}}{dt^{m_3}} \nonumber\\
f_4 &= d_{n_4} \frac{d^{n_4}}{dt^{n_4}} + \cdots + d_{m_4} \frac{d^{m_4}}{dt^{m_4}} \nonumber
\end{align}

Now we will consider parallel and series combination of the systems $S_1$ and $S_2$, and derive the resulting constitutive equation from those of $S_1$ and $S_2$.

\begin{prop}[Series Combination]\label{prop:series_con}
The series combination of two LCR systems $S_1$ and $S_2$ with respective constitutive equations $f_1V_1=f_2I_1$ and $f_3V_2=f_4I_2$ has constitutive equation
\[
f_1f_3V=(f_1f_4+f_2f_3)I.
\]
\end{prop}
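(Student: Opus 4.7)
The plan is to apply Kirchhoff's laws directly in the series configuration to express the combined constitutive equation in terms of $V$ and $I$, then eliminate the intermediate voltages $V_1, V_2$ using the given constitutive equations of $S_1$ and $S_2$. The key structural fact I will exploit is that the $f_i$ are linear differential operators with constant coefficients, so they commute with one another under composition.

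First I would set up the relations imposed by Kirchhoff's laws on a series combination, just as in Example \ref{ex:series}. Kirchhoff's Current Law forces the same current to flow through both subsystems, so $I = I_1 = I_2$, while Kirchhoff's Voltage Law forces the total voltage across the combination to be the sum of the voltages across each piece, so $V = V_1 + V_2$. Together with the two given equations $f_1 V_1 = f_2 I_1$ and $f_3 V_2 = f_4 I_2$, this yields a system of four relations from which I need to eliminate $V_1, V_2, I_1, I_2$.

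Next I would apply the operator $f_3$ to both sides of the first constitutive equation and the operator $f_1$ to both sides of the second to obtain
\[
f_3 f_1 V_1 = f_3 f_2 I, \qquad f_1 f_3 V_2 = f_1 f_4 I.
\]
Adding these two equations and using that $f_1 f_3 = f_3 f_1$ and $f_2 f_3 = f_3 f_2$ (since all operators have constant coefficients and hence commute), the left-hand side becomes $f_1 f_3 (V_1 + V_2) = f_1 f_3 V$, and the right-hand side becomes $(f_1 f_4 + f_2 f_3) I$. This produces the claimed constitutive equation.

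There isn't really a hard step here: the whole argument is an elimination in a commutative ring of differential operators. The only thing that requires any care is the commutativity of the $f_i$, which is an implicit hypothesis built into the form \eqref{eq:fourfs} (constant coefficients). If one were to generalize this identity to operators with time-varying coefficients, the proof would have to track ordering carefully, but in the present setting that subtlety does not appear.
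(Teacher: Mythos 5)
Your proof is correct and follows essentially the same route as the paper: both use Kirchhoff's laws to get $I_1=I_2=I$ and $V=V_1+V_2$, then eliminate the internal variables to arrive at $f_1f_3V=(f_1f_4+f_2f_3)I$. The only difference is presentational — you apply $f_3$ and $f_1$ to the two constitutive equations and add, exploiting commutativity of constant-coefficient operators, whereas the paper formally writes $V=\tfrac{f_2}{f_1}I_1+\tfrac{f_4}{f_3}I_2$ and clears denominators (under a ``relatively prime'' caveat that your version does not need to establish the relation).
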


\begin{proof}
Let $T$ be the series combination of two LCR systems $S_1$ and $S_2$  with respective constitutive equations $f_1V_1=f_2I_1$ and $f_3V_2=f_4I_2$.

\begin{figure}[H]
\begin{tikzpicture}
\draw (0,0) circle (.5) node {$S_1$};
\draw (4,0) circle (.5) node {$S_2$};
\draw (.5,0) -- (3.5,0);
\draw (4.5,0) -- (6,0);
\draw (-.5,0) -- (-2,0);
\draw[dashed] (-2,0) to[out=-120,in=-60]  (6,0); 
\draw (2,-2) node[anchor=south] {$P$};
\filldraw (2,-2) circle (.1);
\end{tikzpicture}
\end{figure}

Note that by Kirchhoff's Current Law, the node $P$ between the two systems must have a net zero incoming current. Therefore, the current of either system must be the same and this current will also be the current of the new system $T$, i.e.~$I_1=I_2=I$.  Similarly, by Kirchhoff's Voltage Law, the voltage on the loop, which in this case is the whole system, must sum to the voltage of the system, i.e.~$V=V_1+V_2$.  If $f_1$ and $f_3$ are relatively prime, then we get
\begin{align}
V&=V_1+V_2 \nonumber \\
V&= \frac{f_2}{f_1} I_1 + \frac{f_4}{f_3}I_2 \nonumber \\
(f_1f_3) V &= (f_1f_4+f_2f_3)I \label{eq:con_series}
\end{align}
Thus, the series combination of the two systems $S_1$ and $S_2$ has constitutive equation of the form in Equation \ref{eq:con_series}.
\end{proof}

\begin{prop}[Parallel Combination]\label{prop:para_con}
The parallel combination of two LCR systems $S_1$ and $S_2$ with respective constitutive equations $f_1V_1=f_2I_1$ and $f_3V_2=f_4I_2$ has constitutive equation
\[
(f_1f_4+f_2f_3)V=f_2f_4I
\]
\end{prop}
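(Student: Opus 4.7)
The plan is to mirror the proof of Proposition \ref{prop:series_con} (series combination), simply swapping the roles of voltage and current as dictated by Kirchhoff's two laws in the parallel configuration. Concretely, I would draw the analogous figure with $S_1$ and $S_2$ joined at two nodes forming a single loop, and then extract two relations: by Kirchhoff's Voltage Law applied to the loop containing $S_1$ and $S_2$, the voltages across the two subsystems must agree with the voltage across the combined system, giving $V_1 = V_2 = V$; by Kirchhoff's Current Law at the branching node, the total current splits as $I = I_1 + I_2$.

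Next I would use the constitutive equations of $S_1$ and $S_2$ to solve for each branch current in terms of the common voltage. Assuming $f_2$ and $f_4$ are relatively prime (the analogue of the hypothesis used in the series case, so that the formal division by these operators is legitimate), from $f_1 V_1 = f_2 I_1$ we get $I_1 = (f_1/f_2)V$, and similarly $I_2 = (f_3/f_4)V$. Substituting into $I = I_1 + I_2$ and clearing denominators by multiplying both sides by $f_2 f_4$ yields
\begin{equation*}
f_2 f_4 \, I \;=\; (f_1 f_4 + f_2 f_3)\, V,
\end{equation*}
which is the claimed constitutive equation.

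The main (minor) obstacle, as in the series case, is justifying the manipulation of the differential operators $f_1,\ldots,f_4$ as if they were elements of a field of fractions. I would handle this exactly as the authors did in Proposition \ref{prop:series_con}: by invoking the relatively-prime hypothesis on $f_2$ and $f_4$ so that $1/f_2$ and $1/f_4$ make sense formally and the cleared-denominator identity holds as an identity of differential operators acting on $V$ and $I$. Aside from this, the proof is purely a dualization of the series derivation, reflecting the well-known voltage-current duality between series and parallel combinations.
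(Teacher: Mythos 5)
Your proposal is correct and follows essentially the same route as the paper: both derive $V_1=V_2=V$ and $I=I_1+I_2$ from Kirchhoff's laws and then combine the two constitutive equations to obtain $(f_1f_4+f_2f_3)V=f_2f_4I$. The only difference is that you spell out the formal division by $f_2$ and $f_4$ (with a relative-primeness hypothesis mirroring the series case), whereas the paper's parallel proof states the final equation directly; your added detail is consistent with how the authors handled the series combination.
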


\begin{proof}
Let $T$ be the parallel combination of two LCR systems $S_1$ and $S_2$  with respective constitutive equations $f_1V_1=f_2I_1$ and $f_3V_2=f_4I_2$.

\begin{figure}[H]
\begin{tikzpicture}
\draw (2,0) circle (.5) node {$S_1$};
\draw (2,2) circle (.5) node {$S_2$};
\draw (2.5,2) -- (3.5,2) -- (3.5,0) -- (2.5,0);
\draw (1.5,2) -- (0.5,2) -- (0.5,0) -- (1.5,0);
\draw (3.5,1) -- (5,1);
\draw (0.5,1) -- (-1,1);
\draw[dashed] (-1,1) to[out=-100,in=-80]  (5,1); 
\draw (3.5,1.2) node[anchor=west] {$P$};
\filldraw (3.5,1) circle (.1);
\end{tikzpicture}
\end{figure}

Again, by Kirchhoff's Voltage Law, we get that the total voltage around the parallel combination loop must be net zero, i.e.~$V_1-V_2=0$.  Also the voltage around the entire system must be net zero, thus $V=V_1=V_2$.  Kirchhoff's Current Law states that the node $P$ must have a net zero incoming current, i.e.~$I-I_1-I_2=0$, hence $I=I_1+I_2$.  Thus, we get that the parallel combination of two systems $S_1$ and $S_2$ has constitutive equation 
\begin{equation}\label{eq:con_para}
(f_1f_4 + f_2f_3)V = (f_2f_4)I.
\end{equation}

\end{proof}

\begin{eg}\label{eg:lcr_series}
Consider the series combination of each of the three base elements of an LCR system, namely a resistor with resistance $R$, a capacitor with inverse capacitance $C$, and an inductor with inductance $L$.

\vspace*{5mm}
\begin{figure}[H]
\begin{circuitikz}
\draw 
(1,0) to (2,0) 
 to[L,a=$L$] (4,0) to[R,a=$R$] (6,0) to[C,a=$C$] (8,0) to (9,0) 
;\end{circuitikz}
\end{figure}

The constitutive equation for this model is
\begin{equation}\label{eq:lcr_series_eg}
\dot{V} = L\ddot{I}+R \dot{I} + CI.
\end{equation}
\end{eg}

The special structure of series-parallel networks means that it is possible to use
notation purely in equations to represent a series-parallel LCR circuit, rather than
necessarily using a figure.  Specifically, we can use the notation $M \vee N$ to denote
the parallel combination of networks $M$ and $N$, and $M \wedge N$ to denote the series
combination.  The base elements can be written using the symbols for their respective parameters.
For example, the network in Example \ref{eg:lcr_series} can be represented as
\[
L \wedge  R \wedge C.
\]
Note that the $\wedge$ and $\vee$ operations are commutative and associative in terms of their 
relations for producing new networks, but they do not satisfy a distributive law.


\section{Identifiability}\label{sec:ident}

In the study of mathematical models, a common question is whether we can 
estimate the values of parameters only from the measurable input and output into the 
system, and more specifically can they be estimated uniquely.  This is the question 
of \textit{structural identifiability.}  

In the case of LCR systems, we consider the identifiability of the coefficient map 
$\mathbf{c} \colon \R^n \to \R^{m}$, mapping the parameters to the coefficients of the constitutive equation. 
Here we make the standard assumption that it is possible with perfect data to recover the constitutive equation.  For a more precise understanding of the relationship between measured data and the defining equation of a model, refer to Ovchinnikov et al.~ \cite{hong_ovchinnikov_pogudin_yap_2020, ovchinnikov_pillay_pogudin_scanlon_2021, ovchinnikov_pogudin_thompson_2021}.  Since this equation is only determined up to a constant factor, we globally assume that one of the coefficients is fixed to the value one, which makes the constitutive equation monic.
We now formally define identifiability:

\begin{defn}
Let $\mathbf{c}$ be a function $\mathbf{c} \colon \theta  \to \R^m$, 
where $\theta \subseteq \R^{n}$ is the parameter space.  The model is 
\textit{globally identifiable} from $\mathbf{c}$ if and only if the map $\mathbf{c}$ is one-to-one.  
The model is \textit{locally identifiable} from $\mathbf{c}$ if and only if the map $\mathbf{c}$ is 
finite-to-one.  The model is \textit{unidentifiable} from $\mathbf{c}$ if and only
 if $\mathbf{c}$ is infinite-to-one.
\end{defn}

\begin{rem}
Suppose that an LCR system $\mathcal{M}$ has  coefficient map $\mathbf{c} \colon \R^n \to \R^{m}$ where $n$ represents the number of parameters, and $m$ represents the number of non-monic, nontrivial coefficients in the constitutive equation.  Note then that if $m <n$, that is, there are more parameters than non-monic, nontrivial coefficients in the constitutive equation, then  $\mathcal{M}$
must be unidentifiable, since $\bfc$ is automatically infinite-to-one.
\end{rem}

\begin{eg}[Example \ref{eg:lcr_series} continued]  \label{ex:LCR}
Recall that Example \ref{eg:lcr_series} has constitutive equation of the series combination of each of the three base elements of an LCR system shown in Equation \ref{eq:lcr_series_eg}, $\dot{V} = L \ddot{I} + R \dot{I} + CI$.  In this case, we get a simple coefficient map of 
\begin{align*}
\mathbf{c} \colon \R[L,C,R] &\to \R[c_2,c_1,c_0] \\
\{R,L,C\} &\mapsto \{L,R,C\}.
\end{align*}
Thus,  each of the parameters $L,R$ and $C$ are identifiable by the coefficient map.   
\end{eg}


\section{Projective Geometry and  Circuit Duality}\label{sec:proj}

In this section, we introduce a perspective based on projective geometry.
This provides us a useful framework for discussing identifiability
that avoids the use of non-monic coefficients.  It also allows
for a straightforward duality results about the interchange of 
capacitors and inductors.

Recall that the projective space $\P\R^n$ is defined as the set of lines through
the origin in $\R^{n+1}$.  Formally, we can take the set $\R^{n+1} \setminus \{0\}$ and
consider the equivalence relation $\sim$ that $x \sim y$ if there is a 
$ \lambda \in \R\setminus \{0\}$ such that $x = \lambda y$.  Then 
$\P\R^n$   is equal to the set of equivalence classes:
\[
\P\R^n  =  (\R^{n+1} \setminus \{0\} )/ \sim  .
\]

In this paper, we consider linear differential equations.  To this end, the set
of all differential equations with a given shape is naturally considered
as a projective space.  Indeed, if $L_1 V = L_2 I$ is a differential equation
coming from a particular LCR circuit, and $\lambda$ is any nonzero constant, then
$\lambda L_1 V  =  \lambda  L_2 V$ describes the same dynamics.
In particular, it is only possible to recover the underlying constitutive 
equation up to a constant.  The typical way that this is dealt with is to talk
about non-monic coefficients in the constitutive equation--  essentially, picking
one term to be the leading term and dividing through so the coefficient of that
term is equal to one.  This is a satisfactory approach in most situations.
We find the perspective from projective geometry can also be useful.

To start with, we consider the constitutive equations of the three basic elements:
\[
V = RI, \quad \quad  V = L \dot{I}, \quad \quad  \dot{V} = C I.
\] 
Thinking about these projectively, we would have the basic constitutive equations:
\begin{equation} \label{eq:projectiveparameters}
R_0 V = R_1 I, \quad \quad  L_0 V = L_1 \dot{I}, \quad \quad  C_0\dot{V} = C_1 I.
\end{equation}
So in projective geometry language, our parameter space for an LCR model, goes from an
$\R^k$ (in the case that there are $k$ basic elements),
to a $(\P \R^1)^k$.

\begin{eg}
Consider the LCR circuit system from Example \ref{ex:LCR}, which has three
components.  Using the projective version of the parameters from (\ref{eq:projectiveparameters})
we get the constitutive equation
\[
R_0 L_0 C_0 \dot{V} = R_0L_1C_0 \ddot{I} + R_1L_0C_0 \dot{I} +  R_0 L_0 C_1 I.
\]
This shows that the coefficient map is a map from $(\mathbb{P}^1)^3$ into $\mathbb{P}^3$, defined by
\[
([ R_0 : R_1],  [ L_0 : L_1], [C_0: C_1]) \mapsto  
( R_0 L_0 C_0: R_0 L_1 C_0: R_1 L_0 C_0 : R_0 L_0 C_1 ).
\]
We arrive at the usual constitutive equation by dehomogenizing 
this one:  specifically by the substitution
\[
R_0 = 1, \, \, R_1 = R, \, \,  L_0 = 1, \, \, L_1 = L, \, \, C_0 = 1, \, \, C_1 = C.
\]
\end{eg}

One useful application of the projective perspective is that 
it makes it possible to derive a duality result for 
identifiability of LCR systems.  The idea of duality of these systems and those like it date back to the work of Alexander Russell in 1904 with inspiration from reciprocals found in geometry, and the goal of finding ``convenient methods of making measurements or even suggest novel instruments or machines of value in electro-technics'' \cite{russell_1904}.

\begin{defn}
Let $M$ be a series-parallel LCR circuit model, expressed as a formula in terms of resistors $R_1, R_2, \ldots$, 
capacitors $C_1, C_2, \ldots$, and inductors $L_1, L_2, \ldots$, using the series and parallel operations $\wedge$ and
$\vee$.  Define the \emph{dual system} $\overline{M}$, to be expressed as a formula in terms of 
$\overline{R}_1, \overline{R}_2, \ldots $,  $\overline{C}_1, \overline{C}_2, \ldots$, and $\overline{L}_1, \overline{L}_2, \ldots$ by the following
rules:
\begin{enumerate}
\item  Swap each $\wedge$ with a $\vee$ and vice versa
\item  Each $R_i$ is replaced with a $\overline{R}_i$
\item  Each $C_i$ is replaced with a $\overline{L}_i$, and
\item  Each $L_i$ is replaced with a $\overline{C}_i$.
\end{enumerate}
\end{defn}

\begin{eg}
Consider the series-parallel network model $M =  (R_1 \wedge C_1)  \vee ( R_2 \wedge L_1)$.
The dual network is $\overline{M} =  
(\overline{R}_1 \vee \overline{L}_1)  \wedge ( \overline{R}_2 \vee \overline{C}_1)$.
The example is illustrated in Figure \ref{fig:dualnetwork}.
\end{eg}

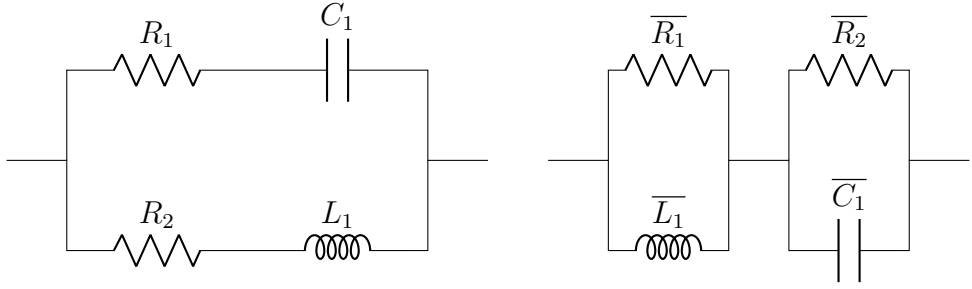
\begin{figure}
\begin{circuitikz}[scale=.8]
\draw
(0,0) to (1,0) to  (1,1.5)  to[R,l=$R_1$] (4,1.5) to[C,l=$C_1$] (7,1.5) to (7,0) to (8,0);
\draw
(1,0) to (1,-1.5) to[R,l=$R_2$]  (4,-1.5) to[L,l=$L_1$] (7,-1.5) to (7,0);
\draw
(9,0) to (10,0) to (10,1.5) to[R,l=$\overline{R_1}$] (12,1.5) to (12,0) to (13,0) to (13,1.5)
to[R,l=$\overline{R_2}$]  (15,1.5) to (15,0) to (16,0);
\draw (10,0) to (10,-1.5)  to[L,l=$\overline{L_1}$] (12,-1.5) to (12,0);
\draw (13,0) to (13,-1.5) to[C,l=$\overline{C_1}$] (15, -1.5) to (15,0);
\end{circuitikz}
\caption{A series-parallel LCR network and its dual network.}
\label{fig:dualnetwork}
\end{figure}

There is no formal difference between components of the original system $M$ and the dual system $\overline{M}$, e.g.~
a resistor $R_1$ and $\overline{R}_1$ are the same from a modeling standpoint.  However,
when we want to talk about the identifiability of these systems, it is useful to distinguish between the 
components of the original system and that of the dual system.

\begin{thm}\label{thm:duality}
Suppose that $M$ is a series-parallel LCR system and let 
$\overline{M}$ be the dual LCR system.
Then $M$ is (generically, locally) identifiable if and only if $\overline{M}$ is.
\end{thm}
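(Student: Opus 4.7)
The plan is to introduce an abstract \emph{swap} operation on constitutive equations and show that it realizes the combinatorial dual $M \mapsto \overline{M}$. Given a constitutive equation $fV = gI$, define its swap to be $gV = fI$; conceptually this exchanges the roles of voltage and current as time-dependent functions. I will show first that swap sends each base element to its dual, then that swap intertwines the series and parallel constructions, and finally that the resulting relation between the coefficient maps is given by algebraic isomorphisms of the parameter and coefficient projective spaces.

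Step 1 (base elements). Using the projective parametrizations $R_0 V = R_1 I$, $L_0 V = L_1 \dot{I}$, and $C_0 \dot{V} = C_1 I$, a direct inspection shows that the swap takes a resistor $[R_0 : R_1]$ to a resistor $[R_1 : R_0]$, an inductor $[L_0 : L_1]$ to a capacitor $[L_1 : L_0]$, and a capacitor $[C_0 : C_1]$ to an inductor $[C_1 : C_0]$. These assignments match rules (2)--(4) of the duality, modulo the involution that exchanges the two projective coordinates on each $\mathbb{P}^1$ factor of the parameter space.

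Step 2 (operations). Starting from two systems with constitutive equations $f_1 V = g_1 I$ and $f_2 V = g_2 I$, Proposition \ref{prop:series_con} gives the series equation $f_1 f_2\, V = (f_1 g_2 + g_1 f_2)\, I$, whose swap is $(f_1 g_2 + g_1 f_2)\, V = f_1 f_2\, I$. This is exactly what Proposition \ref{prop:para_con} produces as the parallel combination of the swapped systems $g_1 V = f_1 I$ and $g_2 V = f_2 I$. A symmetric calculation handles the other direction. By induction on the number of $\wedge$ and $\vee$ operations in the expression for $M$, the constitutive equation of $\overline{M}$ is precisely the swap of the constitutive equation of $M$ (after the relabeling of base elements prescribed by the duality).

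Step 3 (identifiability). Write the constitutive equation of $M$ as $fV = gI$ with $f = \sum a_i D^i$ and $g = \sum b_j D^j$, so that $\mathbf{c}$ lands in the projective space of coefficient vectors $(a_0, \dots, a_p, b_0, \dots, b_q)$. By Steps 1--2, the coefficient map of $\overline{M}$ factors as $\overline{\mathbf{c}} = \sigma \circ \mathbf{c} \circ \tau$, where $\tau$ is the coordinate-swap isomorphism of parameter space (acting on each $\mathbb{P}^1$ factor via the identification from Step 1 and the relabeling $L_i \leftrightarrow C_i$), and $\sigma$ is the linear map on the coefficient projective space that permutes the $a$-block and $b$-block. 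Because $\sigma$ and $\tau$ are algebraic isomorphisms, $\overline{\mathbf{c}}$ is generically finite-to-one if and only if $\mathbf{c}$ is, and the theorem follows. The only delicate point is the projective bookkeeping in Step 1, which forces the coordinate-swap on each $\mathbb{P}^1$; once that is set up correctly, Step 2 is a routine algebraic induction and Step 3 is automatic.
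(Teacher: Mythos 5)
Your proposal is correct and follows essentially the same route as the paper: the ``swap'' $fV = gI \mapsto gV = fI$ on projective constitutive equations is exactly the content of Proposition \ref{prop:projectivedualcons}, proved there by the same induction intertwining Propositions \ref{prop:series_con} and \ref{prop:para_con}, and your Step 3 is the paper's concluding observation that the two coefficient maps differ only by relabeling parameters and permuting coefficients. No substantive differences to report.
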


To prove this, we make direct use of the projective representation of the network.
To each basic component, denoted $R_i, L_i, C_i$, we associated a projective
constitutive equation
\[
R_{0,i} V = R_i I,  \quad  L_{0,i} V = L_i \dot{I}, \quad  C_{0,i} \dot{V} = C_i I.
\]
Then on the projective representation, the duality has the effect of swapping $V$ and $I$
and $L$ and $C$.  So the dual basic constitutive equation in the projective representation
becomes
\[
 R_i  V = R_{0,i} I,  \quad  C_i  V = C_{0,i} \dot{I}, \quad   L_i \dot{V} =  L_{0,i} I.
\]
Note that affinely this corresponds to $\overline{R}_i = 1/R_i$, $\overline{L}_i = 1/C_i$
and $\overline{C}_i = 1/L_i$.

\begin{prop}\label{prop:projectivedualcons} 
Suppose that $M$ is a series-parallel LCR system with corresponding projective parameters $\mathbf{R}=(R_1,\ldots, R_r, R_{0,1},\ldots, R_{0,r} )$,  $\mathbf{L}=(L_1, \ldots , L_s, L_{0,1}, \ldots , L_{0,s})$, and $\mathbf{C}=(C_1, \ldots, C_t, C_{0,1}, \ldots, C_{0,t})$. Let $\overline{M}$ be the dual LCR system with corresponding dual projective parameters $\overline{\mathbf{R}}=( R_{0,1},\ldots, R_{0,r}, R_1,\ldots, R_r )$,  $\overline{\mathbf{L}}=(L_{0,1}, \ldots , L_{0,s}, L_1, \ldots , L_s )$,  and $\overline{\mathbf{C}} =(C_{0,1}, \ldots, C_{0,t}, C_1, \ldots, C_t).$  Let
\[
f_1(\bfR, \bfC, \bfL, \tfrac{d}{dt}) V  =  f_2(\bfR, \bfC, \bfL, \tfrac{d}{dt}) I
\]
be the constitutive equation of $M$.  Then
\[
f_2(\overline{\bfR}, \overline{\bfL}, \overline{\bfC}, \tfrac{d}{dt}) V  =  
f_1(\overline{\bfR}, \overline{\bfL}, \overline{\bfC}, \tfrac{d}{dt}) I
\]
is the constitutive equation of $\overline{M}$.
\end{prop}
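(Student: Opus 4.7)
The plan is to carry out a structural induction on the series-parallel network $M$. The key observation making this work is that dualization interchanges the two combination operations: $\overline{M_1 \wedge M_2} = \overline{M}_1 \vee \overline{M}_2$ and $\overline{M_1 \vee M_2} = \overline{M}_1 \wedge \overline{M}_2$, both immediate from the definition of the dual. This means the inductive decomposition of $M$ transports directly to a matching decomposition of $\overline{M}$ with series and parallel swapped, which is exactly what is needed to pair Proposition \ref{prop:series_con} against Proposition \ref{prop:para_con} inside the induction.

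For the base case, I would check each of the three basic elements separately. Each has a short projective constitutive equation ($R_{0,i} V = R_i I$, $L_{0,i} V = L_i \dot{I}$, or $C_{0,i} \dot{V} = C_i I$), and the discussion of projective dualization immediately preceding the proposition has already written out the dual's constitutive equation by swapping $V$ with $I$ and $L$ with $C$. The base case then reduces to matching these three pairs of equations against the substitution $(\bfR, \bfC, \bfL) \mapsto (\overline{\bfR}, \overline{\bfL}, \overline{\bfC})$ defined in the statement; for each basic element this is a one-line algebraic verification.

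For the inductive step, suppose the result holds for $M_1$ (with constitutive equation $f_1 V = f_2 I$) and $M_2$ (with $f_3 V = f_4 I$), and let $M = M_1 \wedge M_2$. Proposition \ref{prop:series_con} gives the constitutive equation of $M$ as $F_1 V = F_2 I$, with $F_1 = f_1 f_3$ and $F_2 = f_1 f_4 + f_2 f_3$. Since $\overline{M} = \overline{M}_1 \vee \overline{M}_2$, Proposition \ref{prop:para_con} applied to $\overline{M}_1, \overline{M}_2$ (whose equations are given by the inductive hypothesis) produces an equation for $\overline{M}$ whose two sides are sums and products of the $f_i(\overline{\bfR}, \overline{\bfL}, \overline{\bfC})$. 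Because the substitution in question is a ring homomorphism, these combine to $F_2(\overline{\bfR}, \overline{\bfL}, \overline{\bfC}) V = F_1(\overline{\bfR}, \overline{\bfL}, \overline{\bfC}) I$, exactly as desired. The case $M = M_1 \vee M_2$ is handled symmetrically by exchanging the roles of the two combination propositions.

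The main technical obstacle is not the inductive machinery itself, which is essentially formal, but the careful bookkeeping of the substitution $(\bfR, \bfC, \bfL) \mapsto (\overline{\bfR}, \overline{\bfL}, \overline{\bfC})$ at the boundary between the polynomial structure of the $f_i$ and the component-level action of duality, particularly the interchange of $L$ and $C$ variables which permutes two of the argument slots. Once the base case is pinned down precisely and matched with the conventions of the substitution, the inductive step goes through on autopilot, invoking only the ring-homomorphism property of the substitution together with Propositions \ref{prop:series_con} and \ref{prop:para_con}.
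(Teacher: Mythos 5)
Your proposal is correct and follows essentially the same route as the paper: induction on the number of components, with the single-element systems as the base case, and an inductive step that pairs Proposition \ref{prop:series_con} against Proposition \ref{prop:para_con} using the fact that dualization swaps $\wedge$ and $\vee$. The bookkeeping you flag as the main technical point works out exactly as you describe, so no changes are needed.
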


\begin{proof}
The proof is by induction on the number of components.  The statement is
clearly true if there is only one component by the definition of the
duality operations.  

Suppose that $M$ has more than one component.  That means it can be broken up
as either a series or parallel combination of two other components.
We handle the case of a series combination, the case of a parallel combination being
analogous.  So suppose that $M = M_1 \wedge M_2$.
The corresponding dual LCR system is $\overline{M} = \overline{M}_1 \vee \overline{M}_2$.
Let 
\[
f_1(\bfR, \bfC, \bfL, \tfrac{d}{dt}) V_1  =  f_2(\bfR, \bfC, \bfL, \tfrac{d}{dt}) I_1
\]
\[
f_3(\bfR, \bfC, \bfL, \tfrac{d}{dt}) V_2  =  f_4(\bfR, \bfC, \bfL, \tfrac{d}{dt}) I_2
\]
be the constitutive equations of $M_1$ and $M_2$ respectively.  Thus the constitutive
equation of $M$ is
\[
(f_1f_3) (\bfR, \bfC, \bfL, \tfrac{d}{dt}) V  =  
(f_1f_4 + f_2f_3)(\bfR, \bfC, \bfL, \tfrac{d}{dt}) I
\]
By induction, the constitutive equations of $\overline{M}_1$ and $\overline{M_2}$ are
\[
f_2(\overline{\bfR}, \overline{\bfL}, \overline{\bfC}, \tfrac{d}{dt}) V_1  =  
f_1(\overline{\bfR}, \overline{\bfL}, \overline{\bfC}, \tfrac{d}{dt}) I_1
\]
\[
f_4(\overline{\bfR}, \overline{\bfL}, \overline{\bfC}, \tfrac{d}{dt}) V_2  =  
f_3(\overline{\bfR}, \overline{\bfL}, \overline{\bfC}, \tfrac{d}{dt}) I_2
\]
Since $\overline{M}$ is a parallel combination of $\overline{M}_1$ and $\overline{M}_2$
its constitutive equation is
\[
(f_1f_4 + f_2f_3)(\overline{\bfR}, \overline{\bfL}, \overline{\bfC}, \tfrac{d}{dt}) V =
(f_1f_3)(\overline{\bfR}, \overline{\bfL}, \overline{\bfC}, \tfrac{d}{dt}) I.
\]
This is clearly the desired correct form.  This proves the result for series combinations,
and the proof for a parallel combination is similar.
\end{proof}

\begin{proof}[Proof of Theorem \ref{thm:duality}]
By Proposition \ref{prop:projectivedualcons}
the coefficient map for $M$ and $\overline{M}$ is the same
except for relabeling parameters and swapping the order of some of the coefficients.
The coefficient maps clearly have the same behavior in both cases in 
terms of being one-to-one, generically one-to-one, finite-to-one, etc.
\end{proof}


\section{RL/RC System Analysis}\label{sec:RL/RC}

In this section, we consider the identifiability of series-parallel circuits
consisting of only two types of base elements: either resistor-inductor (RL) networks or resistor-capacitor (RC)
networks.  The electromechanical analogy establishes a bijection between identifiability problems
for RL-networks and identifiability problems for viscoelastic mechanical systems consisting
of springs and dashpots.  The results of \cite{mahdi_meshkat_sullivant_2014} will
be used to deduce the main identifiability result for RL series-parallel networks.
Then we use Theorem \ref{thm:duality} to deduce the analogous identifiability result for
 RC series-parallel networks.

First, consider the case of the two-element system generated by 
parallel and series combinations of inductors and resistors.  The electromechanical analogy, specifically the Maxwell or impedance analogy, yields that a system comprised of series and parallel combinations of resistors and inductors is analogous to a mechanical system consisting of series and parallel combinations of springs and dashpots \cite{stephens_bate_1966}.  The spring-dashpot system is commonly referred to as the \textit{viscoelastic model}, and has many applications, including modeling various biological systems.  The problem of identifiability of the spring-dashpot system is well studied, with the problem of determining local identifiability reduced down to counting the number of elements in the system, i.e.~parameters, and comparing that to the number of coefficients \cite{mahdi_meshkat_sullivant_2014}. 

Recall that in determining identifiability, an easy way to determine that a model is unidentifiable by the constitutive equation is to see that there are fewer coefficients than parameters, meaning a necessary condition for identifiability is that there are at least as many coefficients as parameters.  In the case of the viscoelastic system, it was shown in \cite{mahdi_meshkat_sullivant_2014} that the number of coefficients is bounded above by the number of parameters, thus the previous necessary condition for identifiability becomes that there must be exactly the same number of parameters as coefficients.  It is  then shown that this equality of the number of coefficients and parameters is in fact a sufficient condition for local identifiability via the following theorem.

\begin{thm}[Theorem 2, \cite{mahdi_meshkat_sullivant_2014}]
A viscoelastic model represented by a spring-dashpot network is locally identifiable if and only if the number of non-monic, nontrivial coefficients of the corresponding constitutive equation equals the total number of its parameters.
\end{thm}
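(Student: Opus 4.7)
The plan is to induct on the number of base elements, tracking the ``shape'' of each sub-network's constitutive equation $f_1 V = f_2 I$—that is, the set of derivative orders appearing with nonzero coefficient in $f_1$ and $f_2$. Since the forward implication is formal (if $\mathbf{c}\colon \R^n \to \R^m$ is generically finite-to-one then $m \geq n$, and for viscoelastic networks $m \leq n$ holds structurally, as noted in the paragraph preceding the theorem), the real content is the backward implication.

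For the backward direction, I would first classify the possible shapes of spring-dashpot constitutive equations into a small collection of \emph{types}—the introduction of the present paper signals that four types suffice for each two-class system. Using the series and parallel combination rules from Propositions \ref{prop:series_con} and \ref{prop:para_con}, I would tabulate, for each ordered pair of types and each combination operation $\wedge,\vee$, the type of the resulting network together with the incremental change in the parameter count $n$ and the coefficient count $m$. The structural claim to verify is that whenever the combined network $M = M_1 \star M_2$ satisfies $m = n$, both sub-networks must also satisfy this equality, so that the inductive hypothesis applies to $M_1$ and $M_2$ simultaneously.

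The substantive step is then upgrading local identifiability of the two sub-networks to that of $M$. Equivalently, by the submersion criterion for generic finite-to-oneness of polynomial maps, one must show that the Jacobian of the combined coefficient map has generic rank $n$ whenever $m = n$. I would write this Jacobian in block form with respect to the partition of parameters into those of $M_1$ and those of $M_2$, and argue that its determinant is a nonzero polynomial in the combined parameters by specializing to a choice of parameters that decouples the two sub-networks—so that the determinant factors as a product of the two sub-Jacobian determinants times an explicit nonzero scalar arising from the combination rule $f_1 f_3$, $f_1 f_4 + f_2 f_3$, or $f_2 f_4$. The main obstacle is carrying this Jacobian calculation out uniformly across every cell of the type-by-type multiplication table, since several cells involve delicate cancellations in $f_1 f_4 + f_2 f_3$ where one must verify that the potential rank drop does not actually occur; checking these cells is where the bulk of the case analysis concentrates.
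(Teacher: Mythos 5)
You have the right skeleton --- induction on the number of elements, a classification into finitely many types, multiplication tables recording how the parameter and coefficient counts change under $\wedge$ and $\vee$, and a reduction to showing that the count condition propagates to the pieces and suffices for the whole --- and this matches the architecture of the cited proof (and of the LC analogue carried out in Section \ref{sec:LC} of this paper). The gap is in the one step you call ``substantive'': your plan to certify generic full rank of the combined Jacobian by specializing to parameters that ``decouple'' the two sub-networks does not work as stated. Under any specialization that trivializes one side (say $f_3 \to 1$, $f_4 \to 0$), the columns of the Jacobian belonging to the parameters of $M_2$ vanish, so the rank drops to $n_1$ rather than factoring into a product of two full-rank blocks; the combination genuinely mixes the two parameter sets through the products $f_1f_3$ and $f_1f_4+f_2f_3$, and no block-triangular structure with the sub-Jacobians on the diagonal is available. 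You would also be forced to redo this computation separately in every cell of the table, which is exactly the ``delicate cancellation'' problem you flag but do not resolve.

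The missing idea is to linearize in the right variables. Factor the coefficient map through $(\theta_1,\theta_2)\mapsto(f_1,f_2,f_3,f_4)\mapsto(f_1f_3,\ f_1f_4+f_2f_3)$. A generic polynomial $f=f_1f_3$ with prescribed shapes admits only finitely many factorizations, and once $f_1,f_3$ are fixed the equation $g=f_1f_4+f_2f_3$ is \emph{linear} in the unknown coefficients of $f_2$ and $f_4$; its matrix is a (bordered) Sylvester matrix of $f_1$ and $f_3$, which is generically invertible precisely when it is square, because the resultant of generic coprime polynomials is nonzero, and squareness is equivalent to the count condition $m=n$. This is the content of the shape factorization problem (Propositions \ref{prop:goodquad_eq_ident}, \ref{prop:goodquad_if_invert}, and \ref{prop:invert_if_square} in the LC setting, which mirror Propositions 10 and 13 of \cite{mahdi_meshkat_sullivant_2014}); it replaces your cell-by-cell Jacobian analysis with a single uniform linear-algebra argument. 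Your forward direction is fine, and your observation that $m=n$ for the whole must force $m_i=n_i$ for the pieces is the right thing to check, but it still has to be read off from the tables rather than merely asserted.
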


Due to the electromechanical analogy, we can deduce the following equivalent statement in terms of RL systems:

\begin{cor}\label{cor:RL_count_ident}
An RL system is locally identifiable if and only if the number of non-monic, nontrivial coefficients of the corresponding constitutive equation equals the total number of its parameters.
\end{cor}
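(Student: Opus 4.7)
The plan is to deduce this corollary from Theorem 2 of \cite{mahdi_meshkat_sullivant_2014} via the electromechanical analogy, treated as an explicit bijection between series-parallel RL networks and series-parallel viscoelastic (spring-dashpot) networks. First I would make the analogy precise at the level of atomic components. Under the mobility version of the analogy, one identifies voltage with velocity and current with force, so that the dashpot law $F = \eta v$ becomes the resistor law $V = (1/\eta) I$, and the differentiated spring law $\dot F = k v$ becomes $\dot I = k V$, i.e.\ $V = (1/k)\dot I$, matching the inductor law. The parameter dictionary is then $R = 1/\eta$ and $L = 1/k$, and a standard feature of the mobility analogy is that series and parallel combinations are interchanged under translation.

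Next I would verify, by induction on the number of base elements, that the bijection carries constitutive equations to constitutive equations, up only to a relabeling of parameters and a swap of the roles of $f_1$ and $f_2$ (since force/velocity and voltage/current are interchanged). The key observation is that the recursive formulas in Propositions \ref{prop:series_con} and \ref{prop:para_con} depend only on the four differential operators $f_1, f_2, f_3, f_4$ of the subnetworks, and the spring-dashpot world uses the same recursion with the series/parallel labels interchanged. Consequently, for an RL network $\mathcal{N}$ and the corresponding spring-dashpot network $\mathcal{N}^{\text{mech}}$, the number of parameters and the number of non-monic nontrivial coefficients agree, and the two coefficient maps are the same up to this invertible reparameterization.

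Since local identifiability depends only on whether the coefficient map is finite-to-one, and invertible reparameterization preserves this property, $\mathcal{N}$ is locally identifiable if and only if $\mathcal{N}^{\text{mech}}$ is. Applying Theorem 2 of \cite{mahdi_meshkat_sullivant_2014} to $\mathcal{N}^{\text{mech}}$ then immediately yields the stated equivalence for $\mathcal{N}$. The main obstacle will be bookkeeping: one has to check carefully that the swap of $\wedge$ and $\vee$ under the mobility analogy is compatible with the recursive constitutive equations of Propositions \ref{prop:series_con} and \ref{prop:para_con}, and that the reparameterization $R = 1/\eta$, $L = 1/k$ introduces no degeneracies on the generic parameter space. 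Both of these are routine but need to be stated explicitly for the induction to go through cleanly.
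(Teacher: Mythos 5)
Your proposal is correct and follows essentially the same route as the paper, which likewise deduces the corollary from Theorem 2 of \cite{mahdi_meshkat_sullivant_2014} by invoking the electromechanical analogy (the paper in fact gives no further argument beyond that invocation). Your additional bookkeeping---pinning down the component dictionary, noting that the series/parallel swap and the interchange of the two sides of the constitutive equation leave both the parameter count and the non-monic coefficient count unchanged, and that the reparameterization $R=1/\eta$, $L=1/k$ is an invertible change of coordinates preserving finite-to-one-ness---is exactly the detail the paper leaves implicit.
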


Via the duality of Theorem \ref{thm:duality}, we also get the following corollary.

\begin{cor}
An RC system is locally identifiable if and only if the number 
of non-monic, nontrivial coefficients of the corresponding 
constitutive equation equals the total number of its parameters.
\end{cor}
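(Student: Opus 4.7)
The plan is to deduce this corollary from Corollary \ref{cor:RL_count_ident} using the duality established in Theorem \ref{thm:duality}. Given an RC system $M$, its dual system $\overline{M}$ contains only resistors (dual to resistors) and inductors (dual to capacitors), and hence is itself an RL system. Theorem \ref{thm:duality} immediately gives that $M$ is locally identifiable if and only if $\overline{M}$ is, and Corollary \ref{cor:RL_count_ident} characterizes the local identifiability of the RL system $\overline{M}$ in terms of matching the number of non-monic nontrivial coefficients of its constitutive equation with its number of parameters. So what I need to verify is that both the coefficient count and the parameter count are preserved when passing between $M$ and $\overline{M}$.

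The number of parameters is clearly unchanged, since the duality simply relabels each $C_i$ as $\overline{L}_i$ and each $R_i$ as $\overline{R}_i$ without creating or destroying components. For the coefficients, I will appeal to Proposition \ref{prop:projectivedualcons}: if the constitutive equation of $M$ is $f_1 V = f_2 I$, then that of $\overline{M}$ is (up to the parameter relabeling $C \leftrightarrow \overline{L}$, $R \leftrightarrow \overline{R}$) given by $f_2 V = f_1 I$. In particular, the list of coefficients appearing on the two sides is simply swapped between the $V$-side and the $I$-side, and the total number of nonzero terms in the two operators is preserved. After dehomogenizing to obtain the monic constitutive equation, the count of non-monic nontrivial coefficients is the same for $M$ and $\overline{M}$.

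The main (and essentially only) thing to check carefully is that ``non-monic nontrivial coefficient'' is genuinely invariant under the duality swap, which follows from the projective viewpoint of Section \ref{sec:proj} together with Proposition \ref{prop:projectivedualcons}: identifiability and the coefficient count are both determined by the coefficient map on projective parameter space, and duality only permutes its coordinates. Combining these observations, $M$ satisfies the coefficient-parameter count condition if and only if $\overline{M}$ does, and by Corollary \ref{cor:RL_count_ident} applied to $\overline{M}$ together with Theorem \ref{thm:duality}, this holds if and only if $M$ is locally identifiable, as claimed.
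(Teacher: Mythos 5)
Your proposal is correct and follows essentially the same route as the paper: apply the duality of Theorem \ref{thm:duality} to convert the RC system into an RL system, invoke Corollary \ref{cor:RL_count_ident}, and note that duality preserves the parameter and coefficient counts. Your appeal to Proposition \ref{prop:projectivedualcons} to justify the coefficient-count invariance is a slightly more explicit version of the paper's one-line remark that ``duality preserves the number of coefficients,'' but the argument is the same.
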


\begin{proof}
The duality operation turns an RL system into an RC system and vice versa.  Theorem
\ref{thm:duality}, shows that the RL system is identifiable if and only if the dual
RC system is identifiable.  Since the duality preserves the number of coefficients,
this follows from Corollary \ref{cor:RL_count_ident}.
\end{proof}

In general, the problem of identifiability of a model is much more difficult to answer than it is for the RC and RL systems.  We will see in Section \ref{sec:LCR} that in the case of LCR systems, we no longer have a bound on the coefficients by the number of parameters, making finding identifiability criterion considerably more difficult.

In addition to these results on identifiability and relation to the number of coefficients
in the RC/RL models, it is also possible to import from \cite{mahdi_meshkat_sullivant_2014}
precise rules for identifiability of series and parallel combinations of identifiable
models.
These are encapsulated in the identifiability multiplications for the types of
combinations of constitutive equations of different shapes. 
We do not reproduce the identifiability multiplication tables from  \cite{mahdi_meshkat_sullivant_2014} here, but we will see analogous results
for LC systems in the next section.


\section{LC System Analysis}\label{sec:LC}

Now we consider the two-element systems which contain parallel and series combinations of inductors and capacitors, i.e.~LC systems.  
To analyze the identifiability of these LC systems we first will classify these 
systems into four \textit{types} dependent upon the structure of their constitutive equations.  
Since the LC systems are specific cases of LCR systems, we can state several 
general propositions about the structure of their constitutive equations, 
which we prove in the next section.  First, we recognize an upper bound on the number of coefficients on either side of the constitutive equation of an LCR, and thus an LC system.

\begin{prop_max_deg_lcr}
\textit{The maximum order of either side of the constitutive equation of an LCR system is bounded above by the number of parameters, i.e.~base elements, in the model.
}
\end{prop_max_deg_lcr} 

Note that the previous proposition yields that the maximum number of non-monic, nonzero coefficients in the constitutive equation of an LCR system is $2n+1$, where $n$ is the number of parameters.  We can also make a statement relating the lowest and highest orders of the left-hand and right-hand sides of the constitutive equation of an LCR system.

\begin{prop_lcr_diff1}\textit{
In an LCR system, the largest orders on either side of the constitutive equation must be within one of each other.  Similarly, the smallest orders on either side of the constitutive equation must be within one of each other.}
\end{prop_lcr_diff1}

In the case of LC systems, we can actually make a slightly stronger statement.

\begin{cor} \label{cor:diff_eq_1}
In an LC system, the absolute difference of the largest order of either side of the constitutive equation is \textbf{exactly} one.  Similarly, the absolute difference of the smallest order of either side of the constitutive equation of an LC system is \textbf{exactly} one.
\end{cor}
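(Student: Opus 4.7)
The plan is to induct on the number of base elements in the LC network, strengthening Proposition~\ref{prop:LCR_diff_one} from ``within one'' to ``exactly one.'' For the base cases, $V = L\dot I$ has left-degree $0$ and right-degree $1$, while $\dot V = CI$ has left-degree $1$ and right-degree $0$; in each case the largest-degree and smallest-degree differences both equal one.

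For the inductive step, I would take two LC subsystems with constitutive equations $f_1 V_1 = f_2 I_1$ and $f_3 V_2 = f_4 I_2$ satisfying the conclusion, and apply Propositions~\ref{prop:series_con} and~\ref{prop:para_con} to form the series equation $f_1 f_3 \, V = (f_1 f_4 + f_2 f_3) I$ and the parallel equation $(f_1 f_4 + f_2 f_3) V = f_2 f_4 \, I$. Writing $a_i = \deg f_i$, the induction hypothesis gives $a_1 - a_2, a_3 - a_4 \in \{-1, +1\}$. A short case check over the four sign patterns, using $\deg(fg) = \deg f + \deg g$ together with $\deg(f+g) \le \max(\deg f, \deg g)$ (equality when the leading terms do not cancel), then shows that the largest-degree difference remains $\pm 1$ for both the series and the parallel combination.

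The main obstacle is ruling out cancellation in $f_1 f_4 + f_2 f_3$ when $a_1 + a_4 = a_2 + a_3$, since then the claimed ``$\max$'' may fail to be achieved. My plan is to first establish a subsidiary claim by induction on the number of components: in every LC system, each coefficient of both sides of the constitutive equation, viewed as a polynomial in $d/dt$, is a polynomial in the parameters $\{L_i, C_j\}$ with nonnegative integer coefficients. This holds trivially for one-component networks, and constructing series or parallel combinations uses only products and sums of such polynomials (no subtractions ever arise), so the property propagates. In particular, the leading coefficients of $f_1 f_4$ and $f_2 f_3$ are nonzero sums of positive monomials in the parameters, so their sum is again nonzero and $\deg(f_1 f_4 + f_2 f_3)$ genuinely equals $\max(a_1 + a_4, a_2 + a_3)$.

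The smallest-degree statement is then handled by the identical argument with $\deg$ replaced by the valuation $\text{val}(f)$, the smallest exponent of $d/dt$ appearing in $f$. The valuation satisfies $\text{val}(fg) = \text{val}(f) + \text{val}(g)$ and $\text{val}(f+g) \ge \min(\text{val}(f), \text{val}(g))$ with equality when the lowest-degree terms do not cancel, and the same positivity argument rules out the cancellation.
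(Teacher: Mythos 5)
Your proof is correct and follows essentially the same route as the paper, which simply reruns the induction from Proposition~\ref{prop:LCR_diff_one} with the base cases restricted to the single inductor and single capacitor and every ``at most one'' upgraded to ``exactly one.'' The only difference is that you make explicit the no-cancellation step in $f_1f_4+f_2f_3$ (via the observation that all coefficients are polynomials in the parameters with nonnegative coefficients), a point the paper leaves implicit.
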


\begin{proof} 
This is true by the exact same argument in the proof of Proposition \ref{prop:LCR_diff_one}, where the base cases are only the single inductor and single capacitor systems, and replacing any ``less than or equal to'' statements  with ``equal to'' statements.
\end{proof}

Now we make a statement about how many of the coefficients on either side of a constitutive equation of an LC system must be zero.  We introduce the idea of a constitutive equation \textit{alternating}, that is, every coefficient of even or odd order in the equation is zero.

\begin{defn}
We say that a polynomial \textit{alternates} if all odd degree or all even degree coefficients are zero.  We say a polynomial is \textit{saturated} if every coefficient
between the smallest and largest degree is nonzero.
\end{defn}

\begin{rem}
Note the difference between describing a polynomial as ``not alternating'' and ``saturated.''  
In the case of a polynomial not alternating, we could possibly still 
have coefficients of zero between the largest and smallest degree, 
we just do not have that every other coefficient is zero.
\end{rem}

Note that the product of two polynomials, both of which have this alternation property, also must alternate.  With this in mind, if both sides of two LC systems' constitutive 
equations alternate, we must have that one side of their series or parallel combination also alternates, namely the side with a single product of two previous differential operators by Propositions \ref{prop:series_con} and \ref{prop:para_con}.  
With that being said, it is not immediately clear that the side which consists of a sum of two products of the previous differential operators also alternates.  This is because although each of the products in the sum must alternate, it is possible that the powers in either alternating product have different parity, so when summed together the result does not alternate.  
In the case of LC systems, we show that this parity mismatch cannot occur.

\begin{prop}\label{prop:LC_alter}
An LC system must have both sides of its constitutive equation alternate.
\end{prop}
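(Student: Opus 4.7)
The plan is to strengthen the claim so the induction carries itself. Define the parity of an alternating polynomial $f$ to be $\mathrm{par}(f) \in \{0,1\}$ according to whether only even or only odd degree terms appear (with monomials assigned their own degree parity). I will prove by induction on the number of base elements the following stronger statement:

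\textbf{Strengthened claim.} The constitutive equation $f_1 V = f_2 I$ of any LC system has both $f_1$ and $f_2$ alternating, and $\mathrm{par}(f_1) \neq \mathrm{par}(f_2)$.

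For the base cases, the inductor gives $1 \cdot V = L\dot I$ with parities $(0,1)$, and the capacitor gives $\dot V = C \cdot I$ with parities $(1,0)$, so the strengthened claim holds. For the inductive step I would use the explicit series/parallel formulas from Propositions~\ref{prop:series_con} and~\ref{prop:para_con}. The key elementary fact is that for alternating polynomials, multiplication adds parities mod $2$: the product of two alternating polynomials is alternating with parity equal to $\mathrm{par}(f) + \mathrm{par}(g) \bmod 2$. Addition of two alternating polynomials is alternating if and only if they share the same parity.

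Now suppose $M = M_1 \wedge M_2$ with constitutive equations $f_1 V_1 = f_2 I_1$ and $f_3 V_2 = f_4 I_2$ satisfying the hypothesis. Write $\mathrm{par}(f_1) = p$, $\mathrm{par}(f_2) = p+1$, $\mathrm{par}(f_3) = q$, $\mathrm{par}(f_4) = q+1$ (mod $2$). The new constitutive equation is $f_1 f_3 V = (f_1 f_4 + f_2 f_3) I$. Then $\mathrm{par}(f_1 f_3) = p+q$, while $\mathrm{par}(f_1 f_4) = p+q+1 = \mathrm{par}(f_2 f_3)$, so the sum is alternating of parity $p+q+1$. Hence both sides alternate and the two parities $(p+q,\, p+q+1)$ differ, preserving the invariant. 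The parallel case $(f_1 f_4 + f_2 f_3) V = f_2 f_4 I$ is handled symmetrically: $\mathrm{par}(f_2 f_4) = p+q$ and the sum on the left is again alternating of parity $p+q+1$.

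The main obstacle, and the essential observation in the argument, is recognizing that the bare statement "both sides alternate" does not induct on its own: when a sum $f_1 f_4 + f_2 f_3$ appears, the sum of two alternating polynomials need not be alternating unless the two summands have the same parity. Tracking the opposite-parity invariant between $f_1$ and $f_2$ is precisely what forces $f_1 f_4$ and $f_2 f_3$ to share the same parity, so strengthening the inductive hypothesis is the one nontrivial step; everything else is a parity bookkeeping check on the formulas of Propositions~\ref{prop:series_con} and~\ref{prop:para_con}.
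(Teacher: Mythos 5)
Your proof is correct and follows essentially the same route as the paper: the heart of both arguments is the observation that $f_1,f_2$ (and likewise $f_3,f_4$) have opposite parities, which forces $f_1f_4$ and $f_2f_3$ to share a parity so that their sum alternates. The only difference is that the paper obtains the opposite-parity fact from Corollary \ref{cor:diff_eq_1} (the highest degrees of the two sides differ by exactly one), whereas you carry it along as a strengthened inductive invariant, which makes your argument self-contained but is otherwise the same parity bookkeeping.
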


\begin{proof}
We proceed by induction.  Note that by our definition of alternating, the base elements inductor and capacitor are inherently alternating, since one side of either constitutive equation has a single odd power, and the other has a single even power in either case.

Suppose two LC systems $N_1$ and $N_2$ have the alternating property on either side of their constitutive equations $f_1V_1=f_2I_1$ and $f_3V_2=f_4I_2$ respectively.   
From Corollary \ref{cor:diff_eq_1}, 
we know that $f_1$ and $f_2$ have difference of highest order of one, 
hence have different parity, and similarly $f_3$ and $f_4$ have different parity.  
Note that because of the remark before the statement of this proposition, 
to show that both sides of the constitutive equation of a combination of two 
LC systems alternate, we need only show that  $f_1f_4$ and $f_2f_3$ do not have different
parity.    However, we know that $f_1$ and $f_2$ have different parity and $f_3$ and $f_4$ have
different parity.  Then $f_1f_2f_3f_4$ has to have even parity, so $f_1f_4$ and $f_2f_3$
have to have the same parity.
Thus, by induction, both sides of the constitutive equation corresponding to an LC system, must alternate.
\end{proof}

Note now that we can place an upper bound on the number of nonzero coefficients in an LC system, similar to the bound in the RC and RL systems.

\begin{thm}\label{thm:LC_coeff_bound}
The number of non-monic, nontrivial coefficients of an LC constitutive 
equation is bounded above by the number of base elements.
\end{thm}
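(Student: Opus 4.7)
The plan is to combine the three structural results about LC constitutive equations that have already been established in this section: the degree bound from Proposition \ref{prop:max_deg_lcr}, the exact difference-of-one relation from Corollary \ref{cor:diff_eq_1}, and the alternation property from Proposition \ref{prop:LC_alter}. Together these pin down the set of indices where nonzero coefficients can sit, and a direct parity count gives the bound $n+1$ on total nonzero coefficients, which becomes $n$ after normalization.

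To set things up, I would let $n$ denote the number of base elements and write the constitutive equation as $L_1 V = L_2 I$ with $d_1 = \deg L_1$ and $d_2 = \deg L_2$. Proposition \ref{prop:max_deg_lcr} gives $d_1, d_2 \le n$, and Corollary \ref{cor:diff_eq_1} gives $|d_1 - d_2| = 1$; without loss of generality $d_2 = d_1 + 1$, and combined with $d_2 \le n$ this forces $d_1 \le n-1$. This small combination is worth stating carefully, because it is exactly what will make the final count close up tightly.

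Next I would invoke Proposition \ref{prop:LC_alter}: both $L_1$ and $L_2$ are alternating operators in $d/dt$, so their supports lie in arithmetic progressions of step $2$ inside $\{0,1,\ldots,d_i\}$. Therefore the number of nonzero coefficients on the $i$-th side is at most $\lfloor d_i/2\rfloor + 1$. Applying the degree bounds $d_1 \le n-1$ and $d_2 \le n$ and using the elementary identity $\lfloor n/2\rfloor + \lfloor (n-1)/2\rfloor = n-1$, the total number of nonzero coefficients in $L_1 V = L_2 I$ is at most
\[
\left(\lfloor (n-1)/2\rfloor + 1\right) + \left(\lfloor n/2\rfloor + 1\right) = n+1.
\]
Finally, making the constitutive equation monic fixes one of these coefficients to $1$, leaving at most $n$ non-monic, nontrivial coefficients.

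There is no real obstacle in this argument: each of the three inputs has already been proved in the section, and the remaining work is an elementary parity-based counting step. The only thing to be careful about is the bookkeeping in the previous paragraph; using only $d_1, d_2 \le n$ (without the difference-exactly-one refinement) would give the weaker bound $n+2$, so it is essential to exploit Corollary \ref{cor:diff_eq_1} to knock one of the two degrees down to $n-1$.
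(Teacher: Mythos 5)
Your proposal is correct and follows essentially the same route as the paper: it combines Proposition \ref{prop:max_deg_lcr}, Corollary \ref{cor:diff_eq_1}, and Proposition \ref{prop:LC_alter} to bound the degrees by $n$ and $n-1$ and then counts the alternating supports, arriving at $n+1$ nonzero coefficients and hence $n$ after normalization. The only difference is cosmetic bookkeeping (your floors $\lfloor n/2\rfloor+1$ and $\lfloor(n-1)/2\rfloor+1$ versus the paper's ceilings $\lceil\tfrac{n+1}{2}\rceil$ and $\lceil\tfrac{n}{2}\rceil$, which are the same quantities).
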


\begin{proof}
First, note that by Proposition \ref{prop:max_deg_lcr}, the maximum order of either side of the constitutive equation of an LC system with $n$ base elements is $n$.  Also, by Corollary \ref{cor:diff_eq_1}, the maximum order of the other side of constitutive equation of an LC system is $n-1$.  By Proposition \ref{prop:LC_alter}, we know that every other coefficient on either side of the constitutive equation of an LC system must be zero, i.e.~if the maximum order on one side is $n$, then at most $\lceil \frac{n+1}{2} \rceil$ coefficients must be nonzero.  Thus, if both sides of a constitutive equation have their maximal orders $n$ and $n-1$, then the total number of nonzero coefficients is bounded above by 
\[\left\lceil \frac{n+1}{2} \right\rceil + \left\lceil \frac{n}{2} \right\rceil = n+1. \]

Thus, after normalizing, there are at most $n$ non-monic, nontrivial coefficients in the constitutive equation of an LC system.
\end{proof}

\begin{rem}
Note that to recover all $n$ parameters from an LC system with $n$ base elements, 
we need the constitutive equation defining the system to have at least 
$n$ nontrivial coefficients.  This, coupled with Theorem 
\ref{thm:LC_coeff_bound} implies that, as in the case of RL and RC systems, 
a necessary condition for identifiability of an LC system with $n$ parameters 
is that the constitutive equation has $n$ non-monic, nontrivial coefficients.  
We spend the rest of this section showing that, in fact, this is also a sufficient condition.
\end{rem}

Now we can classify identifiable LC systems into four different ``types'' depending on the difference in the largest orders and smallest orders of the left and right-hand sides of their constitutive equations.  We will define the type of the LC system with constitutive equation $f_1V=f_2I$ where 
\begin{align*}
f_1 &= a_{n_1}d^{n_1}/dt^{n_1} + \cdots +a_{m_1}d^{m_1}/dt^{m_1} \\
f_2 &= b_{n_2}d^{n_2}/dt^{n_2} + \cdots +b_{m_2}d^{m_2}/dt^{m_2} \\
\end{align*}
by the ordered pair $(m_1-m_2,\ n_1-n_2)$.  Note that by Corollary \ref{cor:diff_eq_1}, we know that there are only four possible such pairs, which we define as the following types:
\[
A := (-1,-1), \quad B:= (-1,1), \quad C:= (1,-1), \quad D:=(1,1).
\]

We now consider how to build identifiable LC systems from identifiable LC systems.  
We do this by considering the \textit{shape} of each of the 
differential operators of an identifiable LC system which we define as the ordered pair 
$[a,b]$ representing the smallest and largest order respectively of the differential operator.  
Note that depending on the parity of the number of parameters $n$ of an LC system, 
certain types cannot be identifiable.  For example, consider an LC system of type $A$, 
then for the constitutive equation to have enough coefficients to potentially be identifiable, the shape in $V$ must be 
$[0,n-1]$ and the shape in $I$ must be $[1,n]$, 
so we know that $n$ must be odd by Proposition \ref{prop:LC_alter}.  
Similarly, LC systems of type $D$ must have an odd number of parameters to potentially be identifiable, 
while LC systems of types $B$ and $C$ must have an even number of parameters to potentially be identifiable.

The following two tables give the identifiability results of the 
series and parallel combinations of all of the identifiable LC system types, 
with a count of the number of non-monic, nontrivial coefficients, as well as the 
resulting type.  Note that in the column ``Identifiable?", if there is a ``no" we already
can see that this model is unidentifiable, as there are not enough coefficients as 
compared to the number of parameters.  On the other hand, we still need
to prove that the ``yes'' entries are actually identifiable.
Proving that this is the case will occupy the rest of the section and
complete the proof of Theorem \ref{thm:LCsystem}, which is the main
result of this section.

\begin{table}[H]
\begin{tabular}{|l|l|l|l|l|l|}
\hline
Type & Shape in $V$ & Shape in $I$ & Non-monic coefficients & Identifiable? & Type \\
\hline
$(A,A)$ & $[0,n_1+n_2-2]$ & $[1,n_1+n_2-1]$ & $n_1+n_2-1$ & No & $A$ \\
\hline
$(A,B)$ & $[0,n_1+n_2-1]$ & $[1,n_1+n_2]$ & $n_1+n_2$  & Yes & $A$ \\
\hline
$(A,C)$ & $[1,n_1+n_2-2]$ & $[0,n_1+n_2-1]$ & $n_1+n_2-1$  & No & $C$ \\
\hline
$(A,D)$ & $[1,n_1+n_2-1]$ & $[0,n_1+n_2]$ & $n_1+n_2$  & Yes & $C$ \\
\hline
$(B,B)$ & $[0,n_1+n_2]$ & $[1,n_1+n_2-1]$ & $n_1+n_2$  & Yes & $B$ \\
\hline
$(B,C)$ & $[1,n_1+n_2-1]$ & $[0,n_1+n_2]$ & $n_1+n_2$  & Yes & $C$ \\
\hline
$(B,D)$ & $[1,n_1+n_2]$ & $[0,n_1+n_2-1]$ & $n_1+n_2$  & Yes & $D$ \\
\hline
$(C,C)$ & $[2,n_1+n_2-2]$ & $[1,n_1+n_2-1]$ & $n_1+n_2-2$  & No & $C$ \\
\hline
$(C,D)$ & $[2,n_1+n_2-1]$ & $[1,n_1+n_2]$ & $n_1+n_2-1$ &  No & $C$ \\
\hline
$(D,D)$ & $[2,n_1+n_2]$ & $[1,n_1+n_2-1]$ & $n_1+n_2-1$  & No & $D$ \\
\hline
\end{tabular}
\caption{All identifiable series combinations of the four types of LC systems, with resulting shapes, number of coefficients, identifiability, and type.}
\label{table:1}
\end{table}


\begin{table}[H]
\begin{tabular}{|l|l|l|l|l|l|}
\hline
Type & Shape in $V$ & Shape in $I$ & Non-monic coefficients & Identifiable? & Type \\
\hline
$(A,A)$ & $[0,n_1+n_2-1]$ & $[2,n_1+n_2]$ & $n_1+n_2-1$ & No & $A$ \\
\hline
$(A,B)$ & $[1,n_1+n_2]$ & $[2,n_1+n_2-1]$ & $n_1+n_2-1$  & No & $B$ \\
\hline
$(A,C)$ & $[0,n_1+n_2-1]$ & $[1,n_1+n_2]$ & $n_1+n_2$  & Yes & $A$ \\
\hline
$(A,D)$ & $[0,n_1+n_2]$ & $[1,n_1+n_2-1]$ & $n_1+n_2$  & Yes & $B$ \\
\hline
$(B,B)$ & $[1,n_1+n_2-1]$ & $[2,n_1+n_2-2]$ & $n_1+n_2-2$  & No & $B$ \\
\hline
$(B,C)$ & $[0,n_1+n_2]$ & $[1,n_1+n_2-1]$ & $n_1+n_2$  & Yes & $B$ \\
\hline
$(B,D)$ & $[0,n_1+n_2-1]$ & $[1,n_1+n_2-2]$ & $n_1+n_2-1$  & No & $B$ \\
\hline
$(C,C)$ & $[1,n_1+n_2-1]$ & $[0,n_1+n_2]$ & $n_1+n_2$  & Yes & $C$ \\
\hline
$(C,D)$ & $[1,n_1+n_2]$ & $[0,n_1+n_2-1]$ & $n_1+n_2$ &  Yes & $D$ \\
\hline
$(D,D)$ & $[1,n_1+n_2-1]$ & $[0,n_1+n_2-2]$ & $n_1+n_2-2$  & No & $D$ \\
\hline
\end{tabular}
\caption{All identifiable parallel combinations of the four types of LC systems, with resulting shapes, number of coefficients, identifiability, and type.}
\label{table:2}
\end{table}

\begin{rem}
Checking for identifiability of a parallel or series combination of LC systems can be done in polynomial time via Tables \ref{table:1} and \ref{table:2}.  Similarly, checking for identifiability of a series or parallel combination of RL, and thus RC, systems can be done in polynomial time via tables found in \cite{mahdi_meshkat_sullivant_2014}.
\end{rem}

\subsection*{The Alternating Shape Factorization Problem}

We now define the alternating shape factorization problem, which is analogous to the shape factorization problem as defined in \cite{mahdi_meshkat_sullivant_2014}, though this time for alternating polynomials.

\begin{defn}
The \textit{alternating shape factorization problem} for a quadruple of shapes 
\[
Q=([m_1,n_1],[m_2,n_2],[m_3,n_3],[m_4,n_4])
\] 
is defined as follows:  for a generic pair of alternating polynomials 
$(f,g)$ with $f$ monic such that $\shape(f) = [m_1+m_3,n_1+n_3]$ and 
$\shape(g) = [\min(m_1+m_4,m_2+m_3), \max(n_1+n_4,n_2+n_3)]$, 
do there exist finitely many quadruples of alternating polynomials 
$(f_1,f_2,f_3,f_4)$ with shape $f_i=[m_i,n_i]$, $f_1,f_3$ monic, and such that $f=f_1f_3$ and $g=f_1f_4+f_2f_3$?  A quadruple of shapes $Q$ is said to be \textit{alternating good} if the 
alternating shape factorization problem for that quadruple has a positive solution.
\end{defn}

\begin{prop}\label{prop:goodquad_eq_ident}
Let $M$ be the series combination of two LC systems $N_1$ and $N_2$ with respective constitutive equation $f_1V_1=f_2I_1$ and $f_3V_2=f_4I_2$ and let $f_i$ have shape $[m_i,n_i]$.  Then the LC system $M$ is locally identifiable if and only if 
\begin{itemize}
\item[(i)] $N_1$ and $N_2$ are locally identifiable, and
\item[(ii)] $([m_1,n_1],[m_2,n_2],[m_3,n_3],[m_4,n_4])$ is an alternating good quadruple.
\end{itemize}
\end{prop}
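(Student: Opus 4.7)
The plan is to factor the coefficient map of $M$ through the pair $(f_1,f_2,f_3,f_4)$, so that condition (i) controls one factor and condition (ii) controls the other. By Proposition \ref{prop:series_con} the constitutive equation of $M$ is $f_1f_3V=(f_1f_4+f_2f_3)I$. Let $\mathcal{F}$ be the variety of quadruples of alternating polynomials of shapes $[m_i,n_i]$ with $f_1,f_3$ monic, and let $\mathcal{G}$ be the corresponding space of pairs. Define
\[
\phi\colon \theta_{N_1}\times\theta_{N_2}\to \mathcal{F},\qquad (\theta_1,\theta_2)\mapsto (f_1,f_2,f_3,f_4),
\]
which is literally the product $\mathbf{c}_{N_1}\times \mathbf{c}_{N_2}$, and
\[
\psi\colon \mathcal{F}\to \mathcal{G},\qquad (f_1,f_2,f_3,f_4)\mapsto (f_1f_3,\,f_1f_4+f_2f_3).
\]
Then $\mathbf{c}_M=\psi\circ\phi$. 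The first observation is that $\phi$ is finite-to-one if and only if each factor $\mathbf{c}_{N_i}$ is finite-to-one, which is exactly (i); and by the definition of the alternating shape factorization problem, $\psi$ is generically finite-to-one on $\mathcal{F}$ if and only if (ii) holds.

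For the ``if'' direction, I assume (i) and (ii) and conclude that $\psi\circ \phi$ is generically finite-to-one. The only nontrivial point is that the generic fibers of $\psi$ (where it is finite) meet the image of $\phi$. This is a dimension count: when $N_i$ is locally identifiable with $n_i$ parameters, the image of $\mathbf{c}_{N_i}$ has dimension $n_i$, while by Theorem \ref{thm:LC_coeff_bound} the ambient space of admissible alternating pairs for $N_i$ has dimension at most $n_i$ as well, with equality forced by local identifiability. Hence $\im \mathbf{c}_{N_i}$ is Zariski dense in its ambient alternating pair space, so $\im\phi$ is Zariski dense in $\mathcal{F}$, and meets the generic locus where $\psi$ has finite fibers. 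Composing finite-to-one maps yields finite-to-one, so $M$ is locally identifiable.

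For the ``only if'' direction, I argue by contrapositive. If (i) fails, say $\mathbf{c}_{N_1}$ has an infinite fiber over some $(f_1,f_2)$, then fixing any parameters of $N_2$ yields an infinite fiber of $\phi$ over $(f_1,f_2,f_3,f_4)$, which maps to a single point under $\psi$, so $\mathbf{c}_M$ has an infinite fiber and $M$ is not locally identifiable. If (i) holds but (ii) fails, then $\psi$ has infinite fibers over a Zariski dense subset of $\mathcal{G}$; by the density of $\im\phi$ established above, generic points of $\im\phi$ sit above such infinite fibers, and since $\phi$ itself is finite-to-one, these pull back to infinite fibers of $\mathbf{c}_M=\psi\circ\phi$, contradicting local identifiability. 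The main technical obstacle is the density statement tying the ``generic $(f,g)$'' language of the alternating shape factorization problem to points actually hit by $\phi$; once that density is justified by the dimension count above, the rest of the argument is a routine composition analysis.
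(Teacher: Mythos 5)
Your proposal is correct and follows essentially the same route as the paper: both decompose the recovery of the parameters of $M$ into (a) finitely recovering the quadruple $(f_1,f_2,f_3,f_4)$ from the pair $(f_1f_3,\,f_1f_4+f_2f_3)$, which is exactly the alternating shape factorization problem, and (b) finitely recovering the parameters from the quadruple, which is local identifiability of $N_1$ and $N_2$. Your explicit factorization $\mathbf{c}_M=\psi\circ\phi$ together with the dimension-count showing $\im\phi$ is dense in $\mathcal{F}$ merely makes precise the genericity bridge that the paper's argument (which passes directly to factoring $h=f_1f_3$ and solving the linear system for $f_2,f_4$) leaves implicit.
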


We now work toward necessary and sufficient conditions for the series combination of two LCR models to yield a good alternating quadruple, inspired by the work done following Proposition 10 in \cite{mahdi_meshkat_sullivant_2014} for the viscoelastic case.

Let $h$ and $g$ be two alternating polynomials, and note that for fixed shapes $[m_1,n_1]$ and $[m_3,n_3]$, there are at most finitely many factorization $h=f_1f_3$, with alternating $f_1$ and $f_3$ having shapes $[m_1,n_1]$ and $[m_3,n_3]$ respectively.  Thus, in fixing one of these finitely many choices of $f_1$ and $f_3$, the equation $g=f_1f_4+f_3f_2$ is a linear system in the unknown coefficients of alternating $f_2$ and $f_4$.  

For a particular polynomial $f=j_nx^n + \cdots +  j_mx^m$ with shape $[m,n]$, we can denote the coefficients of $f$ in an $n-m+1$ dimensional vector as 
\[
[f]:= \begin{pmatrix}
j_n \\ \vdots \\ j_m
\end{pmatrix}.
\]

Again, if the $f_i$ have respective shape $[m_i,n_i]$, then the vector of coefficients of $f_1f_4$ and $f_2f_3$ can be written as the following matrix products:
\[
[f_1f_4] = \begin{pmatrix}
a_{n_1} & 0 & \cdots & 0 \\
\vdots & a_{n_1} & \cdots & 0 \\
a_{m_1} & \vdots & \cdots & \vdots \\
0 & a_{m_1} & \cdots & 0 \\
\vdots & 0 & \cdots & a_{n_1} \\
\vdots & \vdots & \cdots & \vdots \\
0 & 0 & \cdots & a_{m_1} 
\end{pmatrix}
\begin{pmatrix}
d_{n_4} \\ \vdots \\ d_{m_4}
\end{pmatrix} , \quad 
[f_3f_2] = \begin{pmatrix}
c_{n_3} & 0 & \cdots & 0 \\
\vdots & c_{n_3} & \cdots & 0 \\
c_{m_3} & \vdots & \cdots & \vdots \\
0 & c_{m_3} & \cdots & 0 \\
\vdots & 0 & \cdots & c_{n_3} \\
\vdots & \vdots & \cdots & \vdots \\
0 & 0 & \cdots & c_{m_3} 
\end{pmatrix}
\begin{pmatrix}
b_{n_2} \\ \vdots \\ b_{m_2}
\end{pmatrix}.
\]

We refer to the matrix containing the coefficients of $f_1$ as $G''$ and the matrix containing the coefficients of $f_3$ as $H''$, hence the above matrix products can be represented by $G''[f_4]$ and $H''[f_2]$ respectively.  Note that this matrix $G''$ has dimension $n_1+n_4-m_1-m_4+1$ by $n_4-m_4+1$, while $H''$ has dimension $n_2+n_3-m_2-m_3+1$ by $n_2-m_2+1$.

We can nearly represent the coefficients of $g$ by adding these two products, however there could be a difference in the dimension of the largest and smallest orders of $f_1f_4$ and $f_2f_3$.  Note however that this difference is well understood, as by Corollary \ref{cor:diff_eq_1}, the difference in the largest and smallest orders of $f_1$ and $f_2$ must be at exactly one, and likewise for $f_3$ and $f_4$.  Thus, either the largest order of $f_1f_4$ is the same as the largest order of $f_2f_3$, or it is exactly two larger or smaller.  The same is also true for the smallest orders of $f_1f_4$ and $f_2f_3$. 

Thus, we will let $G'$ and $H'$ represent the matrices $G''$ and $H''$ where either has an additional two rows of zeros added to the top or bottom of their respective matrix, if necessary.  Therefore, we can now represent the coefficients of $g$ as
\[
[g]=[f_1f_4+f_2f_3] = G'[f_4]+H'[f_2] = (G' \ H') \begin{pmatrix}
[f_4] \\ [f_2]
\end{pmatrix} 
\]

Note then that this matrix $(G' \ H')$ has dimension 
\[
\max\{n_1+n_4,n_2+n_3\} - \min\{m_1+m_4,m_2+m_3\}+1 \text{ by } n_2-m_2+n_4-m_4+2.
\]

Now, note that since both $f_2$ and $f_4$ alternate, many of the entries of $\begin{pmatrix}
[f_4] \\ [f_2]
\end{pmatrix}$ are zero.  
In fact, every other entry of $[f_2]$ and $[f_4]$ are zero, hence we can eliminate both these $(n_2-m_2+n_4-m_4)/2$ rows in the vector and the corresponding columns in the matrix $(G' \ H')$, yielding the same information.  The resulting matrix which we now call $(\overline{G} \ \overline{H})$ has dimension 
\[
\max\{n_1+n_4,n_2+n_3\} - \min\{m_1+m_4,m_2+m_3\}+1 \text{ by } \frac{n_2-m_2+n_4-m_4}{2}+2.
\]

Note that every other row of the $(\overline{G} \ \overline{H})$ matrix will consist of only zeros, since the alternation property of the polynomials $f_1$ and $f_3$ yield every other diagonal of $(G' \ H')$ consists of only zeros.  Thus, we can eliminate $(\max\{n_1+n_4,n_2+n_3\} - \min\{m_1+m_4,m_2+m_3\})/2$ rows of $(\overline{G}\ \overline{H})$ and retain the same information.  We define this final reduced matrix to be $(G \ H)$, and note that it has dimension:
\begin{equation}\label{eq:dim_GH}
\frac{\max\{n_1+n_4,n_2+n_3\} - \min\{m_1+m_4,m_2+m_3\}}{2}+1 \text{ by } \frac{n_2-m_2+n_4-m_4}{2}+2.
\end{equation}


We now determine when the alternating shape factorization problem has finitely many solutions.

\begin{prop} \label{prop:goodquad_if_invert}
The quadruple $([m_1,n_1],[m_2,n_2],[m_3,n_3],[m_4,n_4])$ for the four alternating polynomials is alternating good if and only the matrix $(G \ H)$ is invertible.
\end{prop}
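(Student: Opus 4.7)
The plan is to show that the alternating shape factorization problem reduces, after applying unique polynomial factorization to split off $f = f_1 f_3$, to a single linear system whose coefficient matrix is precisely $(G\ H)$. Since $\R[d/dt]$ is a unique factorization domain, a generic monic alternating $f$ of shape $[m_1+m_3, n_1+n_3]$ admits only finitely many factorizations $f = f_1 f_3$ with $f_1, f_3$ monic alternating polynomials of shapes $[m_1, n_1]$ and $[m_3, n_3]$. So it suffices to show that, for each fixed such $(f_1, f_3)$, the residual equation $g = f_1 f_4 + f_2 f_3$ has finitely many alternating solutions $(f_2, f_4)$ of the prescribed shapes if and only if $(G\ H)$ is invertible. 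The derivation preceding the proposition rewrites this equation, after suppressing the coefficient slots of $f_2, f_4$ forced to vanish by alternation and the rows of $(G'\ H')$ forced to vanish by the alternation of $f_1, f_3$, as
\[
(G\ H) \binom{[f_4]}{[f_2]} = [g],
\]
where the suppressed entries on both sides are determined automatically.

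For the ``if'' direction, suppose $(G\ H)$ is invertible. Then it is square with trivial kernel, so the reduced linear system admits a unique solution $\binom{[f_4]}{[f_2]}$ for every $[g]$. Reinstating the zero slots dictated by alternation yields uniquely determined $f_2$ and $f_4$. Combined with the finitely many factorizations $f = f_1 f_3$, this produces only finitely many quadruples, so the shape quadruple is alternating good.

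For the ``only if'' direction, suppose $(G\ H)$ is not invertible. If its kernel is nontrivial (in particular, when it has more columns than rows, or is square and singular), then for any valid $(f_2, f_4)$ we may translate $\binom{[f_4]}{[f_2]}$ by an arbitrary element of $\ker(G\ H)$ to produce another alternating solution of the correct shape, yielding a positive-dimensional family of quadruples and hence violating finiteness. If instead $(G\ H)$ has more rows than columns, its image is a proper subspace of the target, so the generic right-hand side $[g]$ prescribed by the problem lies outside the image and there are no solutions at all; in this case the parametrization $(f_1, f_2, f_3, f_4) \mapsto (f_1 f_3,\, f_1 f_4 + f_2 f_3)$ fails to be dominant onto the prescribed shape space, so the problem does not have a positive solution for generic $(f, g)$ and the quadruple is not alternating good. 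The main subtlety is verifying that the reductions leading from $(G'\ H')$ to $(G\ H)$ lose no information, which follows because the suppressed rows and columns correspond exactly to entries forced to vanish by the alternating hypothesis on all four polynomials.
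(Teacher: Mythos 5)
Your proposal is correct and follows essentially the same route as the paper: fix one of the finitely many factorizations $f = f_1 f_3$, reduce the residual equation $g = f_1 f_4 + f_2 f_3$ to the linear system $(G\ H)\left(\begin{smallmatrix}\overline{[f_4]}\\ \overline{[f_2]}\end{smallmatrix}\right)=\overline{[g]}$ by deleting the slots forced to vanish by alternation, and observe that finiteness of solutions is equivalent to generic invertibility of $(G\ H)$. Your case analysis in the ``only if'' direction (nontrivial kernel versus non-dominant image) is in fact more explicit than the paper's one-line justification.
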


\begin{proof}
We can write the shape factorization problem of $([m_1,n_1],[m_2,n_2],[m_3,n_3],[m_4,n_4])$ in the matrix factored form $G'[f_4]+H'[f_2]=[g]$, where every other coefficient will be zero.  Thus, we can actually reduce this factored form to $G\overline{[f_4]} + H\overline{[f_2]}=\overline{[g]}$ where $\overline{[f]}$ is the coefficient vector of the alternating function $f$ with the zeros removed, that is 
\[
(G \ H) \begin{pmatrix}
\overline{[f_4]} \\ \overline{[f_2]}
\end{pmatrix}=\overline{[g]}.
\]

This system has a unique solution if and only if $(G \ H)$ is invertible for a generic choice of parameter values, i.e.~generically invertible.
\end{proof}

Note that for the matrix $(G \ H)$ to be generically invertible, it needs to be square and have full rank. Recall that the \textit{Sylvester matrix} associated to two polynomials $f(x)=a_nx^n+a_{n-1}x^{n-1} + \cdots +a_1x+a_0$ and $g(x) = b_mx^m + b_{m-1}x^{m-1} + \cdots + b_1x +b_0$ is the $n+m$ by $n+m$ matrix that has columns of the coefficients of $f(x)$ repeated $m$ times, and columns of the coefficients of $g(x)$ repeated $n$ times as:
\begin{center}
$\begin{pmatrix}
a_n & 0 & \cdots & 0 & b_m & 0 & \cdots & 0 \\
\vdots & a_n & \cdots & 0 & \vdots & b_m & \cdots & 0 \\
a_0 & \vdots & \cdots & \vdots & b_0 & \vdots & \cdots & \vdots \\
0 & a_0 & \cdots & 0 & 0 & b_0 & \cdots & 0 \\
\vdots & 0 & \cdots & a_n & \vdots & 0 & \cdots & b_m \\
\vdots & \vdots & \cdots & \vdots & \vdots & \vdots &\cdots & \vdots \\
0 & 0 & \cdots & a_0 & 0 & 0 & \cdots & b_0
\end{pmatrix}
$
\\
$\underbrace{\phantom{spacespacespace}}_m \underbrace{\phantom{spacespacespace}}_n$
\end{center}

The determinant of the Sylvester matrix of two polynomials is zero if and only if the two polynomials have a common root.  Thus for generic polynomials $f$ and $g$, 
the Sylvester matrix is invertible (see Chapter 3 of \cite{CLO2005} for relevant background on
resultants). 

Note that in the case of $(G \ H)$, this matrix is nearly the Sylvester matrix of two polynomials, though not exactly $f_1$ and $f_3$, but it possibly contains extra rows and columns. 

The following proposition and proof mirror that of Proposition 13 of \cite{mahdi_meshkat_sullivant_2014}.

\begin{prop}\label{prop:invert_if_square}
If the matrix $(G \ H)$ is square, then it is generically invertible.
\end{prop}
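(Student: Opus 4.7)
The plan is to reduce the claim to the classical fact that the Sylvester matrix of two generic univariate polynomials is invertible. Since $\det(G\ H)$ is a polynomial in the coefficients of $f_1$ and $f_3$, it suffices to exhibit one alternating pair $(f_1,f_3)$ of the prescribed shapes for which $(G\ H)$ is invertible; generic invertibility then follows because the vanishing locus of this determinant is a proper Zariski-closed subset of the parameter space of allowed $(f_1,f_3)$.

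First I would use the alternation property to push everything through the substitution $y = x^2$. Since an alternating polynomial of shape $[m,n]$ has $m \equiv n \pmod 2$, each $f_i$ factors uniquely as $f_i(x) = x^{m_i} h_i(x^2)$ with $h_i$ an ordinary polynomial of degree $(n_i - m_i)/2$. Rewriting $g = f_1 f_4 + f_2 f_3$ in the variable $y$ and clearing the common power of $x$, this identity becomes a relation among ordinary polynomials in $y$, and the removal of zero rows and columns of $(G'\ H')$ that defines $(G\ H)$ corresponds exactly to pairing the coefficient vectors of $h_2$ and $h_4$ against the coefficient vector of $h_1 h_4 + h_2 h_3$. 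I would then identify the resulting reduced matrix with the Sylvester matrix $\mathrm{Syl}(h_1,h_3)$, up to a possible augmentation by a small number of extra rows and columns that isolate the extreme-degree coefficients of $f_2$ and $f_4$. The squareness hypothesis, together with the dimension formula (\ref{eq:dim_GH}), forces the row and column counts to agree with those of this (possibly augmented) Sylvester matrix.

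Invertibility of the augmented matrix then reduces, via a block-triangular argument, to invertibility of $\mathrm{Syl}(h_1,h_3)$ times generically nonzero powers of the leading and trailing coefficients of $f_1$ and $f_3$. Since $\det \mathrm{Syl}(h_1,h_3) = \mathrm{Res}(h_1,h_3)$ is not identically zero as a polynomial in the coefficients of $h_1$ and $h_3$, the reduced matrix is invertible for a generic alternating choice of $f_1$ and $f_3$, completing the argument. The main obstacle I anticipate is precisely the bookkeeping of these extra rows and columns: the four parity cases for the pairs $(m_i,n_i)$ produce slightly different augmentation blocks, and verifying uniformly that a block-triangular structure always isolates a genuine Sylvester submatrix (rather than a mildly degenerate variant) is where the most careful case analysis will be required.
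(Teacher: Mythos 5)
Your plan coincides with the paper's own proof: the paper likewise compresses each alternating $f_i$ to an ordinary polynomial $\hat{f}_i$ (your $h_i$, i.e.\ the substitution $y=x^2$), exhibits $(G\ H)$ in a block-triangular form whose middle block is $\mathrm{Syl}(\hat{f}_1,\hat{f}_3)$ and whose corner blocks are $1\times 1$ entries given by extreme coefficients of $f_1$ or $f_3$, and concludes by genericity of the resultant. The parity bookkeeping you defer is exactly the content of the paper's case analysis (three cases each for the max of the top degrees and the min of the bottom degrees), so your outline is correct and complete in approach, just not yet in detail.
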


\begin{proof}
Suppose $(G \ H)$ is square.  We claim that the columns of $(G\ H)$ can be ordered in such a way that the block form of the matrix is
\[
\begin{pmatrix}
S' & 0 & 0 \\ 
X & S & Y \\
0 & 0 & S'' 
\end{pmatrix}
\]
where $S$ is the Sylvester matrix of $\hat{f_1}$ and $\hat{f_3}$ where $\hat{f}$ for an alternating polynomial $f$ is the polynomial with lowest degree zero and coefficient vector $\overline{[f]}$.  That is, $\hat{f}$ is a polynomial which does not alternate, with the same coefficients as $f$ (associated to different powers).  

We will show that either $(G \ H)$ is exactly the Sylvester matrix of generic polynomials $\hat{f_1}$ and $\hat{f_3}$, hence has full rank, or that one or both of $S'$ and $S''$ are 1 by 1 matrices with nonzero entry, and $S$ is the same Sylvester matrix, meaning that $(G \ H)$  continues to have full rank.
Note that the Sylvester matrix $S$ of these two polynomials $\hat{f_1}$ and $\hat{f_3}$ will have dimension $(n_1-m_1+n_3-m_3)/2$ by $(n_1-m_1+n_3-m_3)/2$.  
Recall from Equation \ref{eq:dim_GH} that $(G \ H)$ is a matrix of dimension 
\begin{equation}
\frac{\max\{n_1+n_4,n_2+n_3\} - \min\{m_1+m_4,m_2+m_3\}}{2}+1 \text{ by } \frac{n_2-m_2+n_4-m_4}{2}+2.
\end{equation}

Without loss of generality, we assume that $\max\{n_1+n_4,n_2+n_3\} = n_1+n_4$.  This occurs in one of three ways by Corollary \ref{cor:diff_eq_1}: 
\begin{itemize}
\item[(i)] $n_1=n_2+1$ and $n_4=n_3-1$, (in which case the two sums are equivalent)
\item[(ii)] $n_1=n_2-1$ and $n_4=n_3+1$, (in which case the two sums are equivalent) or
\item[(iii)] $n_1 = n_2+1$ and $n_4=n_3+1$.
\end{itemize}
In the first two cases, we do not add any zero rows above either $G''$ or $H''$ (described above) in making the matrices $G'$ and $H'$.  In either of these cases, we remove the row and column involving $S'$ from the block matrix.

In the last case, we add exactly two rows of zeros above the $H''$ matrix to make the matrix $H'$, and add no rows of zeros above $G''$ to make $G'$, hence we will have that $S'$ will be a 1 by 1 matrix with nonzero entry $a_{n_1}$, and $X$ is the remaining coefficients of $\hat{f_1}$ followed by zeros.

\smallskip

Now we consider the two possible cases of $\min\{m_1+m_4,m_2+m_3\} $.

First, suppose $\min\{m_1+m_4,m_2+m_3\} = m_1+m_4$.  This implies that the dimension of the $(G\ H)$ matrix is $(n_1+n_4-m_1-m_4)/2+1$ by $(n_2+n_4-m_2-m_4)/2+2$.

This can occur one of three ways by Corollary \ref{cor:diff_eq_1}:
\begin{itemize}
\item[(a)] $m_1=m_2+1$ and $m_4=m_3-1$, (in which case the two sums are equivalent)
\item[(b)] $m_1=m_2-1$ and $m_4=m_3+1$, (in which case the two sums are equivalent) or
\item[(c)] $m_1 = m_2-1$ and $m_4=m_3-1$.
\end{itemize}
In the first two cases, we do not add any zero rows below either $G''$ or $H''$ in making the matrices $G'$ and $H'$.  In either of these cases, we remove the row and column involving $S''$ from the block matrix above.

In the last case, we add exactly two rows of zeros below the $H''$ matrix to make the matrix $H'$, and add no rows of zeros below $G''$ to make $G'$.  Thus, we will have that $S''$ will be a 1 by 1 matrix with nonzero entry $a_{m_1}$, and $Y$ will continue with the other coefficients of $\hat{f_1}$ with zeros above.

Similarly, consider the case when $\min\{m_1+m_4,m_2+m_3\} = m_2+m_3$. Here, we have that the dimension of $(G \ H)$ is $(n_1+n_4-m_2-m_3)/2+1$ by $(n_2+n_4-m_2-m_4)/2+2$. This can occur one of three ways by Corollary \ref{cor:diff_eq_1}:
\begin{itemize}
\item[(A)] $m_1=m_2+1$ and $m_4=m_3-1$, (in which case the two sums are equivalent)
\item[(B)] $m_1=m_2-1$ and $m_4=m_3+1$, (in which case the two sums are equivalent) or
\item[(C)] $m_1 = m_2+1$ and $m_4=m_3+1$.
\end{itemize}

In the first two cases, we do not add any zero rows below either $G''$ or $H''$ in making the matrices $G'$ and $H'$.  In either of these cases, we remove the row and column involving $S''$ from the block matrix above.

In the last case, we add exactly two rows of zeros below the $G''$ matrix to make the matrix $G'$, and add no rows of zeros below $H''$ to make $H'$.  Thus, we will have that $S''$ will be a 1 by 1 matrix with nonzero entry $c_{m_3}$, and $Y$ will continue with the other coefficients of $\hat{f_3}$ with zeros above.

%
%

In any case, we have that $S$ is the Sylvester matrix of two polynomials with generic coefficients, namely $\hat{f_1}$ and $\hat{f_3}$, hence has full rank, and if $S'$ or $S''$ are in the block matrix, then they are 1 by 1 matrices with nonzero entries, hence have full rank.  Thus the matrix $(G \ H)$ has generic full rank, i.e.~is generically invertible.
\end{proof}

\begin{thm}\label{thm:LCsystem}
An LC system is locally identifiable if and only if the number of non-monic, nontrivial coefficients in the constitutive equation is equal to the number of parameters.
\end{thm}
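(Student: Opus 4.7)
The necessary direction is immediate from earlier results. Theorem \ref{thm:LC_coeff_bound} shows that the number of non-monic, nontrivial coefficients of the constitutive equation of an LC system with $n$ base elements is at most $n$. If this number is strictly less than $n$, then the coefficient map $\mathbf{c}\colon \R^n \to \R^m$ with $m < n$ is automatically infinite-to-one, as noted in the Remark following the definition of identifiability, so the system is unidentifiable. Hence local identifiability forces the coefficient count to equal the parameter count.

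For sufficiency, the plan is to induct on the number of base elements in the series-parallel network. The base case is a single inductor or a single capacitor, both of whose constitutive equations $V = LI'$ and $V' = CI$ are trivially locally (in fact globally) identifiable. For the inductive step, write $M = N_1 \wedge N_2$ or $M = N_1 \vee N_2$ and assume $M$ has $n_1 + n_2$ non-monic coefficients. I would first argue that each $N_i$ must individually be identifiable: if some $N_i$ had fewer than $n_i$ non-monic coefficients, then by tracking the shapes through the product and sum formulas in Propositions \ref{prop:series_con} and \ref{prop:para_con}, the combination $M$ would have strictly fewer than $n_1 + n_2$ non-monic coefficients, contradicting the hypothesis. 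So by the induction hypothesis each $N_i$ is identifiable, and by Corollary \ref{cor:diff_eq_1} together with the alternation property (Proposition \ref{prop:LC_alter}) each $N_i$ falls into exactly one of the four types $A, B, C, D$.

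By Proposition \ref{prop:goodquad_eq_ident} (and the parallel analogue, which is proved identically), identifiability of $M$ now reduces to showing that the shape quadruple $([m_1,n_1],[m_2,n_2],[m_3,n_3],[m_4,n_4])$ associated with the decomposition is alternating good. By Proposition \ref{prop:goodquad_if_invert} this is equivalent to generic invertibility of the reduced matrix $(G \ H)$, and by Proposition \ref{prop:invert_if_square} it suffices to verify that $(G \ H)$ is square. The two identifiability multiplication tables enumerate all sixteen pairings of types for both the series and the parallel operation, flagging as ``Yes'' precisely the seven (in each table) that have $n_1+n_2$ non-monic coefficients. For each such ``Yes'' row I would plug the corresponding shapes into the dimension formula of Equation \ref{eq:dim_GH} and check that
\[
\frac{\max\{n_1+n_4, n_2+n_3\} - \min\{m_1+m_4, m_2+m_3\}}{2} + 1 \;=\; \frac{n_2 - m_2 + n_4 - m_4}{2} + 2,
\]
which reduces to a routine arithmetic identity on the endpoints of the shapes associated to each type.

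The main obstacle is this final case-by-case verification. It is not computationally hard, but one must carefully track which of the three subcases from the proof of Proposition \ref{prop:invert_if_square} applies (i.e., whether extra rows of zeros are introduced at the top or bottom when passing from $G'', H''$ to $G', H'$), since this affects whether $(G \ H)$ is exactly a Sylvester matrix or a Sylvester matrix augmented by a $1 \times 1$ block on the top-left or bottom-right. Once squareness is confirmed in each ``Yes'' entry, invertibility follows from the Sylvester-resultant structure for generic parameters, completing the induction.
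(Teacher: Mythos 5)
Your proposal is correct and follows essentially the same route as the paper: induction on the series--parallel decomposition, reduction of identifiability via Propositions \ref{prop:goodquad_eq_ident} and \ref{prop:goodquad_if_invert} to generic invertibility of the matrix $(G \ H)$, and then Proposition \ref{prop:invert_if_square} once squareness is established. The only difference is the final step: where you propose verifying squareness case by case over the ``Yes'' rows of the two type tables, the paper gets it in one stroke by writing the parameter counts of $N_1$ and $N_2$ as $(n_1+n_2-m_1-m_2)/2+1$ and $(n_3+n_4-m_3-m_4)/2+1$ via the inductive hypothesis and observing that the equality ``number of coefficients of $M$ equals number of parameters of $M$'' reduces, after subtracting $(n_1+n_3-m_1-m_3)/2$ from both sides, to exactly the squareness condition of Equation \ref{eq:dim_GH}, so no case analysis is required.
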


\begin{proof}
Here, we show that if the number of parameters equals the number of non-monic, nontrivial coefficients, then the $(G \ H)$ matrix is square, hence by Propositions \ref{prop:goodquad_eq_ident}, \ref{prop:goodquad_if_invert}, and \ref{prop:invert_if_square} the model is locally identifiable.

Suppose $\mathcal{M}$ is an LC system which consists of a series combination of two smaller LC systems $N_1$ and $N_2$ with respective constitutive equations $f_1 V_1 = f_2 I_1$ and $f_3 V_2 = f_4 I_2$ where $f_1$ and $f_3$ are monic.  Also, suppose $f_i$ has shape $[m_i,n_i]$.  By induction, we suppose that the number of parameters equals the number of nontrivial, non-monic coefficients in both systems $N_1$ and $N_2$.  This implies that $N_1$ has $(n_1+n_2-m_1-m_2)/2+1$ parameters, and $N_2$ has $(n_3+n_4-m_3-m_4)/2+1$ parameters.

Assume the number of parameters equals the number of coefficients in the whole system, i.e.~
\begin{align*}
&\frac{n_1+n_2+n_3+n_4-m_1-m_2-m_3-m_4}{2}+2  \\&= \frac{\max\{n_1+n_4,n_2+n_3\} - \min\{m_1+m_4,m_2+m_3\} +n_1+n_3-m_1-m_3 }{2}+1
\end{align*}
Subtracting $(n_1+n_3-m_1-m_3)/2$ from both sides, we get
\begin{align*}
\frac{n_2+n_4-m_2-m_4}{2}+2  &= \frac{\max\{n_1+n_4,n_2+n_3\} - \min\{m_1+m_4,m_2+m_3\} }{2}+1
\end{align*}

This occurs exactly when the matrix $(G \ H)$ is square via Equation \ref{eq:dim_GH}.  
The argument for a parallel combination is identical, and omitted.
\end{proof}

\section{LCR System Analysis}\label{sec:LCR}

Now we consider the systems containing series and parallel combinations 
of all three base elements, that is LCR systems. 
We are not able to derive a complete classification of identifiability of these models,
and there already seem to be some significant challenges to generalizing the results
for two element systems to arbitrary LCR.  For example, there are general series-parallel LCR systems where there are more coefficients than the number of parameters.
Thus, there can be nontrivial relations between the coefficients in a general
series-parallel LCR system.  We will explore those equations in Section \ref{sec:equations}.
In this section, we look at basic properties of the general LCR systems,
including the numbers of types of systems in terms of the structure
of the constitutive equation.  We will show that there are $22$ different types.

To begin our study of general LCR systems,  we first 
consider several bounds on the orders of the constitutive equation.

\begin{prop}\label{prop:max_deg_lcr}
The maximum order of either side of the constitutive equation of an LCR system is bounded above by the number of parameters, i.e.~base elements, in the model.
\end{prop}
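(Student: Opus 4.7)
The plan is to proceed by straightforward induction on the number of base elements $n$ in the LCR system, exploiting the recursive formulas for series and parallel combinations given in Propositions \ref{prop:series_con} and \ref{prop:para_con}.

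For the base case $n=1$, one checks each of the three base elements directly against their constitutive equations (\ref{eq:res}), (\ref{eq:ind}), (\ref{eq:cap}): a resistor has constitutive equation $V = RI$ with both sides of degree $0$, an inductor has $V = L\dot{I}$ with degrees $0$ and $1$, and a capacitor has $\dot{V} = CI$ with degrees $1$ and $0$. In every case the maximum degree on either side is at most $1$, which matches the single parameter present.

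For the inductive step, suppose $M$ is a series-parallel LCR network with $n \geq 2$ base elements. Then $M$ decomposes as either $M_1 \wedge M_2$ or $M_1 \vee M_2$, where $M_i$ has $n_i$ base elements with $n_1 + n_2 = n$. Let the constitutive equations of $M_1$ and $M_2$ be $f_1 V_1 = f_2 I_1$ and $f_3 V_2 = f_4 I_2$ respectively, and write $d_i := \deg f_i$. By the inductive hypothesis, $\max(d_1,d_2) \leq n_1$ and $\max(d_3,d_4) \leq n_2$. For the series combination, Proposition \ref{prop:series_con} gives the constitutive equation $f_1f_3 \, V = (f_1 f_4 + f_2 f_3) \, I$, whose left-hand side has degree $d_1 + d_3 \leq n_1 + n_2 = n$, and whose right-hand side has degree at most $\max(d_1 + d_4,\, d_2 + d_3) \leq n_1 + n_2 = n$. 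The parallel case is identical using Proposition \ref{prop:para_con}, giving $(f_1 f_4 + f_2 f_3) V = f_2 f_4 \, I$ with both sides bounded in degree by $n_1 + n_2$.

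Since the inductive bound applies to the constitutive equation as produced by the recursive formulas, and any subsequent simplification (e.g.\ dividing out a common factor, as noted in the ``relatively prime'' hypothesis of Proposition \ref{prop:series_con}) can only decrease the maximum degree, the bound $n$ on the degree of either side is preserved under normalization. There is no real obstacle here: the only mild subtlety is verifying that the ``sum'' side $f_1f_4 + f_2f_3$ does not somehow exceed the bound, but this is immediate since the degree of a sum is at most the maximum of the degrees of the summands.
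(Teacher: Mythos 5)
Your proposal is correct and follows essentially the same route as the paper's own proof: induction on the number of base elements, with the base case checked on the three single-element systems and the inductive step reading the degree bound off the series/parallel combination formulas of Propositions \ref{prop:series_con} and \ref{prop:para_con}. The only cosmetic differences are that you treat the parallel case directly (the paper invokes circuit duality) and you add a harmless remark about cancellation only lowering degrees.
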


\begin{proof}
We will prove this statement inductively.  As the base case, note that the statement is true for each of our one element systems containing either a resistor, capacitor, or inductor.

Suppose that for LCR systems with less than $k$ base elements, the resulting constitutive equation has largest power less than or equal to the number of base elements.  

Now consider some LCR system $\mathcal{M}$ with $k$ base elements, 
which is a series combination of two smaller (in number of base elements) 
models which have $m$ and $n$ parameters respectively where $m+n=k$.  
By the inductive hypothesis, we know that the largest order of either side of 
the constitutive equations of the two smaller models are $m$ and $n$ respectively, 
i.e.~if $f_1V_1=f_2I_1$ and $f_3V_2=f_4I_2$ are the constitutive equations of the two models respectively, then $\deg(f_1),\deg(f_2) \leq n$ and $\deg(f_3),\deg(f_4) \leq m$.

Recall by Proposition \ref{prop:series_con} that the series combination of two systems with constitutive equations $f_1V_1=f_2I_1$ and $f_3V_2=f_4I_2$ yields constitutive equation:
\[
f_1f_3V=(f_1f_4+f_2f_3)I.
\]

Therefore, the largest power of either side of the constitutive equation is $n+m=k$.  
By duality, we also have the result when $\mathcal{M}$ is a parallel combination.
\end{proof}

\begin{prop} \label{prop:LCR_diff_one}
In an LCR system, the largest orders on either side of the constitutive equation must be within one of each other.  Similarly, the smallest orders on either side of the constitutive equation must be within one of each other.
\end{prop}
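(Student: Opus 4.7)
The plan is to argue by induction on the number $k$ of base elements. For $k=1$, the three possible systems---resistor $V = RI$, inductor $V = L\dot{I}$, and capacitor $\dot{V} = CI$---have constitutive equations whose two sides differ in both largest and smallest degrees by exactly $0$, $1$, and $1$ respectively, establishing the base cases.

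For $k > 1$, the system $\mathcal{M}$ decomposes as either $\mathcal{M}_1 \wedge \mathcal{M}_2$ or $\mathcal{M}_1 \vee \mathcal{M}_2$, with constitutive equations $f_1 V_1 = f_2 I_1$ and $f_3 V_2 = f_4 I_2$. Write $n_i$ and $m_i$ for the largest and smallest degrees of $f_i$. By induction, $|n_1 - n_2|$, $|n_3 - n_4|$, $|m_1 - m_2|$, and $|m_3 - m_4|$ are all at most $1$. In the series case, Proposition \ref{prop:series_con} yields $f_1 f_3 V = (f_1 f_4 + f_2 f_3) I$; the left-hand side has largest degree $N_L = n_1 + n_3$ and smallest degree $m_L = m_1 + m_3$. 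The inductive bounds immediately give $n_1 + n_4, n_2 + n_3 \in \{N_L - 1, N_L, N_L + 1\}$ and $m_1 + m_4, m_2 + m_3 \in \{m_L - 1, m_L, m_L + 1\}$. So, provided no cancellation occurs at the extreme degrees of the right-hand side sum, the largest and smallest degrees of the right-hand side differ from those of the left-hand side by at most one.

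The main obstacle is ruling out cancellation in $f_1 f_4 + f_2 f_3$ at the extreme degrees $\max(n_1 + n_4, n_2 + n_3)$ and $\min(m_1 + m_4, m_2 + m_3)$. To address this, I would prove a parallel positivity lemma by the same induction: for positive base-element parameter values, every coefficient of an LCR constitutive equation is nonnegative, and both the leading and trailing coefficients of $f_1$ and of $f_2$ are strictly positive. Positivity is obvious in the base cases and is preserved by Propositions \ref{prop:series_con} and \ref{prop:para_con} because those recursions involve only sums and products of coefficients---never differences. Positivity then guarantees that the extreme-degree coefficient of $f_1 f_4 + f_2 f_3$ is either a single positive product or the sum of two positive products, so no cancellation can occur; hence the extreme degrees of the right-hand side equal $\max(n_1 + n_4, n_2 + n_3)$ and $\min(m_1 + m_4, m_2 + m_3)$, finishing the series case.

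The parallel case can be handled identically via Proposition \ref{prop:para_con}; alternatively, and more cleanly, it follows from the duality of Theorem \ref{thm:duality} and Proposition \ref{prop:projectivedualcons}, which swaps $\wedge$ with $\vee$ and interchanges the roles of $f_1$ and $f_2$---operations that leave invariant the differences of the largest and smallest degrees on the two sides of the constitutive equation. This completes the inductive step and thereby the proof.
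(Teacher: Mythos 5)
Your proof is correct and follows essentially the same route as the paper's: induction on the number of base elements, computing the extreme degrees of $f_1f_3$ and $f_1f_4+f_2f_3$ from the inductive bounds $|n_1-n_2|,|n_3-n_4|,|m_1-m_2|,|m_3-m_4|\le 1$, and handling the parallel case by circuit duality. The only difference is that you explicitly rule out cancellation at the extreme degrees of $f_1f_4+f_2f_3$ via a positivity lemma, a point the paper's proof passes over in silence; this is a careful addition rather than a divergence in method.
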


\begin{proof}
We prove the proposition using induction on the number of base elements in the system.  As the base case, note that in both the inductor, and the capacitor base element, the difference in the largest power between the two sides of the constitutive equation is one.  Similarly, since the smallest power is the largest power, that difference is also one.  In the case of a single resistor, both sides have a single element of order zero.

Suppose then that the statement of the proposition is true for LCR systems with less than $k$ base elements.
Then suppose that $M$ is an LCR system with $k$ elements which is generated by, without loss of generality, a series combination of two strictly smaller (in terms of number of base elements) systems $N_1$ and $N_2$.  Suppose that $N_1$ and $N_2$ have respective constitutive equations $f_1V_1=f_2I_1$ and $f_3V_2=f_4I_2$ where each $f_i$ is defined just as in Equation \ref{eq:fourfs}.

By the inductive hypothesis, since $N_1$ and$N_2$ have less than $k$ base elements, then we know that $|n_1-n_2| \leq 1$, $|m_1-m_2| \leq 1$, $|n_3-n_4|\leq 1$, and $|m_3-m_4| \leq 1$.  Note that by Equation \ref{eq:con_series}, the constitutive equation of the system $M$ generated by combining $N_1$ and $N_2$ in series is
\begin{equation}\label{eq:diff1_series}
f_1f_3V = (f_1f_4+f_2f_3)I.
\end{equation}

Thus, we have that the maximal order of the left-hand side of the Equation \ref{eq:diff1_series} is $n_1+n_3$, while the maximal order of the right-hand side is $\max\{n_1+n_4,n_2+n_3\}$.  Therefore, the difference in the largest order of either side of the constitutive equation of $M$ is 
\begin{align*}
|n_1+n_3-\max\{n_1+n_4,n_2+n_3\}|&=|\min\{n_1+n_3-n_1-n_4,\ n_1+n_3-n_2-n_3\}| \\
&= |\min \{n_3-n_4,\ n_1-n_2\}|
\end{align*}
Note that in either case, we have that $|n_1-n_2|\leq 1$ and $|n_3-n_4|\leq 1$, thus the difference of the maximal order of either side of the constitutive equation of $M$ is at most one.

The bound for the minimal order is similar (with maximums and minimums swapped) and is omitted.
The bounds also follow for parallel combinations by circuit duality.



Thus, by induction, LCR systems have the difference of the largest order of either side of their constitutive equations at most one, and the difference of the smallest order of either side of their constitutive equations at most one.
\end{proof}

\begin{rem}
The main difference between all of the two element systems and the three element system is that we no longer have a bound on the number of coefficients of the constitutive equation by the number of parameters.  In fact, by the previous two propositions, we can have up to $2n+1$ nonzero, non-monic coefficients in the constitutive equation of an LCR system with $n$ base elements.  As a result of this lack of a bound, we could have systems with more coefficients than base elements which are locally identifiable.  This is not entirely surprising, however the existence of systems with more base elements than coefficients which are \textbf{not} locally identifiable leads us to believe that to find a sufficient condition for the identifiability of an LCR system, we need to look beyond comparing the number of coefficients to the number of parameters.
\end{rem}

\begin{eg} 
Consider the LCR system depicted in Figure \ref{fig:7}.
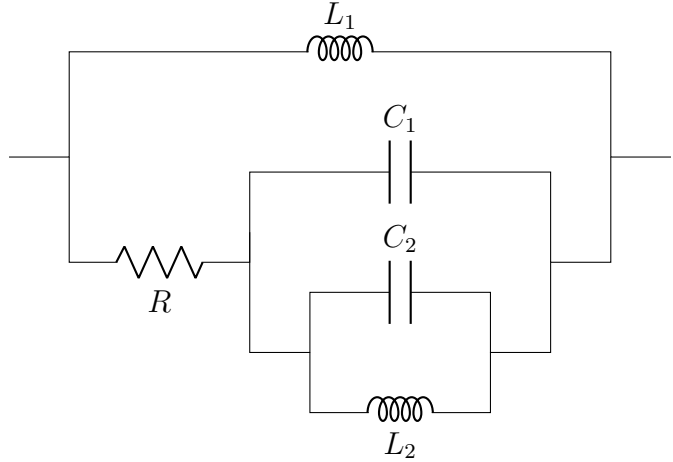
\begin{figure}[H] 
\begin{circuitikz}[scale=.8]
\draw 
(0,.25) to (1,.25) to (1,2) to[L,l=$L_1$] (10,2) to (10,.25) to (11,.25);   
\draw (1,.25) to (1,-1.5) to[R,a=$R$] (4,-1.5) to (4,0) to[C,l=$C_1$] (9,0) to (9,-1.5) to (10,-1.5) to (10,.25);
\draw (4,-1) to (4,-3) to (5,-3) to (5,-2) to[C,l=$C_2$] (8,-2) to (8,-3) to (9,-3) to (9,-1.5);
\draw (5,-3) to (5,-4) to[L,a=$L_2$] (8,-4) to (8,-3);
\end{circuitikz}
\caption{An LCR system $L_1 \vee(R \wedge (C_1 \vee (L_2 \vee C_2))))$.}
\label{fig:7}
\end{figure}
This system has constitutive equation:

\begin{multline*}
(C_1L_1L_2+C_2L_1L_2)V^{(3)} + (C_1L_2R+C_2L_2R)\ddot{V}+(C_1C_2L_1+C_1C_2L_2)\dot{V}+ C_1C_2RV \\
= (C_1L_1L_2R+C_2L_1L_2R)I^{(3)} + C_1C_2L_1L_2 \ddot{I} + C_1C_2L_1R\dot{I}.
\end{multline*}

Note that this LCR system has five parameters and after normalization has six non-monic, nonzero coefficients in its constitutive equation.  If we consider the Jacobian matrix of the map from the space of parameters to the space of coefficients of the constitutive equation corresponding to this example, we see that the rank of the Jacobian is non-maximal, meaning the system is not identifiable.  This example first shows that the number of coefficients in a constitutive equation of an LCR system is not bounded by the number of parameters, and moreover having at least as many coefficients as parameters in an LCR system is not a sufficient condition for local identifiability.
\end{eg}

We now introduce a similar notion of types as in the LC systems to more general LCR systems.

\begin{defn}\label{def:type}
Let $\mathcal{M}$ be an LCR system with constitutive equation $f_1V=f_2I$ where 
$f_1 = a_{n_1}x^{n_1}+ \cdots + a_{m_1}x^{m_1}$ and $f_2 = b_{n_2} x^{n_2} + \cdots +b_{m_2}x^{m_2}$.  Then we define the \textit{type} of $\mathcal{M}$ as the quadruple 
$(m_1-m_2,\ n_1-n_2, \ c, \ d)$ where $c,d=1$ if
$f_1$ and $f_2$ have the alternating property respectively, and are 0 otherwise.
\end{defn}

\begin{eg}\label{eg:base_types}
The three base elements can be characterized by type, where because there is only a single nonzero coefficient on either side of the constitutive equation, each side of all three constitutive equations are defined as alternating.  More explicitly, by Equations \ref{eq:res}, \ref{eq:ind}, and \ref{eq:cap}, we have that the resistor, inductor, and capacitor have respective types $(0,0,1,1)$, $(-1,-1,1,1)$ and $(1,1,1,1)$.
\end{eg}

\begin{rem}
The four types, $A,B,C,D$ of LC systems described in Section \ref{sec:LC} can be generalized as $A=(-1,-1,1,1)$, $B=(-1,1,1,1)$, $C=(1,-1,1,1)$ and $D=(1,1,1,1)$ as LCR types.
\end{rem}

Note that there are certain restrictions on what this type quadruple can look like. 
For example we know that because of Proposition \ref{prop:LCR_diff_one}, 
the first two entries of the type must both be in the set $\{-1,0,1\}$.  
We have not yet shown that for an LCR system that the left and right-hand 
sides of the constitutive equation must strictly have the alternating property 
or the saturated property.  More precisely, it is not obvious that a differential operator in the constitutive equation of an LCR system cannot skip an order without having the alternating property, i.e.~have an order with zero coefficient, but not have the remaining even or odd orders also have zero coefficients.  We now prove that this is in fact the case, and conclude that all LCR systems fall into one of these types.  To do this, we first need the following Lemma.

\begin{lem}\label{lem:firstbadtype}
No LCR system can have constitutive equation of type $(*,-1,0,1)$ for any entry of $*$.
\end{lem}

\begin{proof}
Suppose $\mathcal{M}$ is an $n$ base element LCR system with type of the form $(*,-1,0,1)$, and constitutive equation $f_1V=f_2I$.  Note that $f_2$ must alternate since the fourth entry of the type quadruple is 1, and also $f_2$ must have largest order one larger than $f_1$ since the second entry of the type quadruple is $-1$.  Similarly, $f_1$ must not alternate since the third entry of the quadruple is $0$.

Recall that the three base elements, the resistor, inductor, and capacitor, have respective types $(0,0,1,1)$, $(-1,-1,1,1)$ and $(1,1,1,1)$, hence $\mathcal{M}$ cannot be a base element, i.e.~$n\geq 1$.  Therefore, $\mathcal{M}$ must be made of some series or parallel combination of two systems with strictly fewer elements, say $A_1$ and $A_2$.  Let $A_1$ and $A_2$ have constitutive equations $g_1V_1=g_2I_1$ and $g_3V_2=g_4I_2$ respectively.  

First, if we suppose $\mathcal{M}$ is a series combination of $A_1$ and $A_2$, then we have that $f_1=g_1g_3$ and $f_2=g_1g_4+g_2g_3$.  Note that since $f_2$ must alternate, then all four of $g_1,g_2,g_3$ and $g_4$ must alternate (and have some parity conditions), however $f_1$ must not alternate meaning that one of $g_1$ or $g_3$ cannot alternate, a contradiction.  Thus, an LCR system of type $(*,-1,0,1)$ cannot be constructed via a series combination of other systems with fewer elements.

Now suppose $\mathcal{M}$ is a parallel combination of $A_1$ and $A_2$.  In this case, we have that $f_1=g_1g_4+g_2g_3$ and $f_2=g_2g_4$.  Since $f_2$ must have one higher largest order than $f_1$, we must have that $\deg(g_1) < \deg(g_2)$ and $\deg(g_3) < \deg(g_4)$.  Thus, by Proposition \ref{prop:LCR_diff_one}, we have that $\deg(g_2)=\deg(g_1)+1$ and $\deg(g_4)=\deg(g_3)+1$, meaning that $A_1$ and $A_2$ have respective types with second entry both being $-1$.

Given that $f_2$ is alternating, we must have that both $g_2$ and $g_4$ are alternating, i.e.~both $A_1$ and $A_2$ have a 1 in the last entry of their types.  Similarly, given that $f_1$ is not alternating, we must have either $g_1$ or $g_3$ not alternating, or that $g_1g_4$ and $g_2g_3$ have opposite parity (the sum zips together).  Note though that $g_1g_4$ and $g_2g_3$ must have the same parity, since the pairs $g_1,g_2$ and $g_3,g_4$ must have different parity (because their maximal orders have a difference of exactly one from above), meaning $g_1g_4$ has even (odd) parity if and only if $g_2g_3$ has even (odd) parity.  Thus, $A_1$ and $A_2$ must have types of the form $(*,-1,r_1,1)$ and $(*,-1,r_2,1)$ where at least one of $r_1$ or $r_2$ is equal to $0$.

Therefore, the only way to generate a system of type $(*,-1,0,1)$ is by a parallel combination of two systems, one of which has type $(*,-1,0,1)$, but since none of the base elements have this type, then this type must not exist. 

\end{proof}

\begin{prop}[Skipping but not alternating]\label{prop:skip_bn_alt}
Let $M$ be a series-parallel LCR system.  Then each side of the 
constitutive equation must either be alternating or saturated.
\end{prop}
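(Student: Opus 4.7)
The plan is strong induction on the number of base elements of $M$, relying on the observation that every coefficient in every $f_i$ arising in the inductive construction is a sum of monomials in the parameters $R_\ell, L_\ell, C_\ell$ with positive integer coefficients. Consequently, no cancellation can occur in any product or sum of such polynomials, so that $\mathrm{supp}(fg) = \mathrm{supp}(f) + \mathrm{supp}(g)$ (Minkowski sum) and $\mathrm{supp}(f+g) = \mathrm{supp}(f) \cup \mathrm{supp}(g)$ whenever $f$ and $g$ are built in this way. The base case is a single base element, whose constitutive equation has a single monomial on each side and is thus trivially both alternating and nonalternating.

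For the inductive step, write $M$ as a series combination $N_1 \wedge N_2$ of strictly smaller systems (the parallel case follows by the identical argument, or by invoking Theorem \ref{thm:duality}), with respective constitutive equations $f_1 V_1 = f_2 I_1$ and $f_3 V_2 = f_4 I_2$ of shapes $[m_i, n_i]$. By Proposition \ref{prop:series_con}, the combined constitutive equation is $(f_1 f_3) V = (f_1 f_4 + f_2 f_3) I$, so one must verify that both $f_1 f_3$ and $f_1 f_4 + f_2 f_3$ are alternating or nonalternating. For the product $f_1 f_3$ a short case analysis on the Minkowski sum of supports suffices: two alternating factors give an alternating product (the parities of the supports add), two nonalternating factors with at least two terms each give an interval support (hence nonalternating), the mixed alternating--nonalternating case with at least two terms in each factor again yields an interval, and single-term factors merely shift the other factor's support without changing its type.

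For the sum $f_1 f_4 + f_2 f_3$ the argument combines the product classification with the constraints from Proposition \ref{prop:LCR_diff_one}, which give $|(n_1+n_4) - (n_2+n_3)| \leq 2$ and $|(m_1+m_4) - (m_2+m_3)| \leq 2$. A case analysis on the types of the two summands $f_1 f_4$ and $f_2 f_3$ then shows that their union of supports is either a single interval (producing a nonalternating sum) or a same-parity set (producing an alternating sum). The main obstacle will be ruling out the partial-skip configurations: either the two summand supports are both intervals separated by a gap of exactly $2$, or one is a same-parity set and the other an interval with mismatched parities. In each such extreme case, the degree-difference constraints of Proposition \ref{prop:LCR_diff_one} force at least one of the inner operators $f_1, f_2, f_3, f_4$ to be a single monomial, which collapses the offending summand to a monomial as well and so preserves the alternating-or-nonalternating structure of the whole union.
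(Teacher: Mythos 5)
Your setup---induction on the number of components, positivity of all coefficients so that supports add under products and take unions under sums, and the classification of the products $f_1f_3$, $f_1f_4$, $f_2f_3$ via Minkowski sums of supports---matches the strategy of the paper's proof (the mixed alternating/nonalternating product case is the paper's Lemma \ref{lem:prod_nonalt}), and your handling of the two-interval configuration is fine. The gap is in your final claim that the degree constraints of Proposition \ref{prop:LCR_diff_one} force one of $f_1,f_2,f_3,f_4$ to be a monomial in the remaining partial-skip configuration, where one summand of $f_1f_4+f_2f_3$ is alternating and the other is a genuine interval with top (or bottom) degree differing by exactly $2$. That claim is false. Take $f_1=x^2+1$, $f_2=x$, $f_3=x^2+x+1$, $f_4=x^3+x$: every degree bound of Proposition \ref{prop:LCR_diff_one} is satisfied, none of the four operators is a monomial, yet $f_1f_4=x^5+2x^3+x$ is alternating, $f_2f_3=x^3+x^2+x$ is a nondegenerate interval, their top degrees differ by $2$, and the sum has support $\{1,2,3,5\}$, which skips degree $4$. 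So degree bookkeeping alone cannot close the induction.

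What actually rules out this configuration is a structural fact about which types of constitutive equations are realizable by series-parallel LCR networks, and this is where the paper's proof does its real work. In the bad configuration the subsystem $N_2$ would have to have type $(*,-1,0,1)$ in the sense of Definition \ref{def:type}: $V$-side nonalternating, $I$-side alternating and of strictly larger top degree (in the example above, $f_3,f_4$ are exactly the data of a type $(-1,-1,0,1)$ system). The paper shows no LCR system has this type by a separate descent argument: no base element has it; a series combination cannot produce it because alternation of $g_1g_4+g_2g_3$ forces all four inner operators to alternate, contradicting nonalternation of $g_1g_3$; and a parallel combination can produce it only from a subsystem already of this type. Your proof needs this lemma (or something equivalent); as written, the inductive step does not go through.
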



\begin{proof}
We prove this statement by induction on the number of base elements in the system.  As the base case, note that in each of the one-element systems, the statement is certainly true, as each side only has a single nonzero coefficient.

Now, suppose the statement is true for LCR systems with less than $k$ base elements, that is, either side of the constitutive equation for LCR systems with less than $k$ base elements cannot skip an order without having that side alternate.   Also, suppose $\mathcal{M}$ is an LCR system with $k$ base elements, which is generated by a series combination of two smaller systems $N_1$ and $N_2$ with strictly less than $k$ base elements.  Let $N_1$ and $N_2$ have constitutive equations $f_1V_1 = f_2I_1$ and $f_3V_2=f_4I_2$ respectively, and note by the inductive hypothesis, $f_1,f_2,f_3$ and $f_4$ cannot skip a coefficient without alternating.  Therefore, each of $N_1$ and $N_2$ can be characterized by a type as described in Definition \ref{def:type}.
Let us define each polynomial $f_1,f_2,f_3$ and $f_4$ as in Equation \ref{eq:fourfs}.

By Proposition \ref{prop:LCR_diff_one}, we know that the first two entries of each $N_i$ system's type is either $-1,0$ or $1$.

Also, we have that the constitutive equation of $\mathcal{M}$ is $f_1f_3 V = (f_1f_4+f_2f_3)I$.  Note that for each of the three products of two $f_i$, if both polynomials in the product alternate, then the resulting product alternates.  Also, if at least one of the polynomials in the product is saturated, then by Lemma \ref{lem:prod_nonalt}, we know that the resulting product is saturated.  Thus, we immediately have that the left-hand side of the constitutive equation, i.e.~the product $f_1f_3$, cannot skip without alternating.  We also know that each element of the sum of the right-hand side cannot skip without alternating, hence to finish the proof we need only show that their sum cannot skip without alternating.  

Note that by Proposition \ref{prop:LCR_diff_one}, we know that the absolute difference in the largest orders, and the absolute difference of the smallest orders of $f_1f_4$ and $f_2f_3$ are both at most two.  

Thus, the only way that the right-hand side of the constitutive equation for $\mathcal{M}$ could skip an order without alternating is if one of the elements of the sum had maximal (or minimal) order two larger (smaller) than the other, and the one with larger maximal order alternates while the other is saturated.  This would result in skipping the second largest (smallest) order of the sum, but the rest of the sum having nonzero coefficients.  

Suppose without loss of generality that $f_1f_4$ has largest order two larger than $f_2f_3$ and suppose $f_1$ and $f_4$ alternate, while at least one of $f_2$ and $f_3$ is saturated. Thus, the third entry of the type of $N_1$ and the fourth entry of the type of $N_2$ must be 1.  Also, either the fourth entry of the type of $N_1$ or the third entry of the type of $N_2$ must be zero.  Without loss of generality we suppose that it is $f_3$ that is saturated.  Note that for $f_1f_4$ to have largest order two larger than $f_2f_3$, we must have that $n_1=n_2+1$ and $n_4=n_3+1$, i.e.~$f_1$ and $f_4$ have largest order one larger than $f_2$ and $f_3$ respectively by Proposition \ref{prop:LCR_diff_one}.   Thus, the second entry in the type of $N_1$ must be a $1$, while the second entry in the type of $N_2$ must be a $-1$.  Therefore, $N_1$ must have type $(*,1,1,*)$ and $N_2$ must have type $(*,-1,0,1)$ where the $*$ represents any possible entry.

Since the only way to have a constitutive equation which skips but does not alternate is to have an element of type $(*,-1,0,1)$ which does not exist by Lemma \ref{lem:firstbadtype}, then there cannot be a constitutive equation which skips but does not alternate with $k$ base elements.  Thus, by induction, no LCR system can skip but not alternate.

The case of a parallel combination follows from circuit duality.
\end{proof}

\begin{lem}\label{lem:prod_nonalt}
If $f$ and $g$ are polynomials with non-negative coefficients such that $f$ is saturated and $g$ is alternating, then the product $fg$ is saturated.
\end{lem}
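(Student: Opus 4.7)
The plan is to work directly with the coefficients of $fg$, using the nonnegativity hypothesis in an essential way. Write $f = \sum_{i=m_1}^{n_1} a_i x^i$ where $a_i > 0$ for every $m_1 \le i \le n_1$ by the assumption that $f$ is nonalternating, and write $g = \sum_j b_j x^j$ with $b_j > 0$ precisely at the positions $j \in \{m_2, m_2+2, \ldots, n_2\}$. (I would pause to note that this is the content of ``$g$ alternates'' as used throughout the paper: the alternating constitutive equations produced by the inductive series/parallel construction carry a nonzero coefficient at every position of the distinguished parity, so this reading is consistent with the uses of the lemma.) Because every coefficient in sight is nonnegative, the coefficient of $x^k$ in $fg$ is
\[
c_k = \sum_{i+j=k} a_i b_j,
\]
a sum of nonnegative terms, and it is strictly positive if and only if there is some pair $(i,j)$ with $i+j = k$, $m_1 \le i \le n_1$, and $j \in \{m_2, m_2+2,\ldots, n_2\}$.

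To finish, I would exhibit such a pair for every integer $k$ with $m_1 + m_2 \le k \le n_1 + n_2$, which is exactly the range between the smallest and largest degrees of $fg$. The endpoints are immediate, realized by $(m_1,m_2)$ and $(n_1,n_2)$. For an interior $k$, I would case-split on whether $k \le n_1 + m_2$. If so, setting $j = m_2$ and $i = k - m_2$ already gives $m_1 \le i \le n_1$. If instead $k > n_1 + m_2$, the natural candidate is $j = k - n_1$: when $k - n_1 \equiv m_2 \pmod 2$ the pair $(n_1, k - n_1)$ works, and when the parity is wrong I would shift to $j = k - n_1 + 1$, $i = n_1 - 1$, using $n_1 > m_1$ (automatic when $f$ is a genuinely nonalternating polynomial of more than one term, which is the only situation in which the lemma is invoked) to keep $i \ge m_1$.

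The only subtleties are bookkeeping: one must check that the chosen $j$ lies in $[m_2,n_2]$, which comes down to $k - n_1 + 1 \le n_2$, and this is automatic whenever a parity shift is actually needed (i.e.\ when $k < n_1 + n_2$). I do not expect a substantial obstacle here. The main conceptual point is really the interpretation of ``alternating'' that makes $g$ have nonzero coefficient at every same-parity slot; once that is pinned down, the lemma reduces to the case analysis above and the observation that a sum of nonnegative reals with at least one positive summand is positive.
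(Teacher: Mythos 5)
Your proposal is correct and follows essentially the same route as the paper's proof: reduce to showing that every degree $k$ with $m_1+m_2 \le k \le n_1+n_2$ is realized as $i+j$ with $i$ in the (consecutive) support of $f$ and $j$ in the (every-other-slot) support of $g$, nonnegativity of the coefficients guaranteeing that no cancellation can occur in the convolution sum. The only difference is that you carry out the covering argument by an explicit case split (and usefully flag the implicit hypotheses that $f$ has at least two consecutive nonzero terms and that ``alternating'' is read as nonzero at every same-parity slot), whereas the paper dispatches the same combinatorial step with the one-line remark that every integer is a sum of an odd and an even number.
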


\begin{proof}
Suppose $f$ is saturated, and $g$ is alternating, such that they have form
\begin{align*}
f&=a_nx^n+a_{n-1}x^{n-1}+\cdots +a_{m+1}x^{m+1} + a_{m}x^m \\
g&=b_vx^v+b_{v-2}x^{v-2}+\cdots +b_{u+2}x^{u+2} + b_{u}x^u 
\end{align*}

Note that the product of $f$ and $g$ has form
\[
fg = \sum_{k=m+u}^{n+v} \left( \sum_{i+j=k} a_ib_j \right) x^k.
\]

Thus, to show the statement of the Lemma is true, we need only show that for each $k$, there is some nonzero combination of coefficients from $g$ and $f$ with corresponding degree adding to $k$.  Given that $g$ alternates and $f$ is saturated, and all coefficients are non-negative, this problem equates to the following:
Given the sets of non-negative integers $\mathcal{F}=\{n,n-1,\ldots, m+1,m\}$ and
 $\mathcal{G}=\{v,v-2,\ldots , u+2,u\}$, for every integer $k$ with $m+u \leq k \leq n+v$,
can we find a sum of an element $i\in \mathcal{F}$ and an element $j \in \mathcal{G}$ such that $i+j=k$?  The answer to this question is yes, as we can generate every number from $m+u$ to $n+v$ as 
\[
m+u, (m+1)+u, m+(u+2), (m+1)+(u+2), \ldots, (n-1)+(v-2), n+(v-2), (n-1)+v,n+v.
\]

Thus, for each $k$ there is some $a_i$ and $b_j$ such that $a_ib_j\neq 0$ and $i+j=k$, hence $fg$ is saturated, as desired.
\end{proof}

\begin{cor}\label{cor:alltyped}
Every LCR system has one of the types as defined in Definition \ref{def:type}.
\end{cor}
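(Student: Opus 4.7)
The plan is to observe that this corollary is essentially a bookkeeping consequence of the two structural propositions just proved. Unpacking Definition \ref{def:type}, the type of an LCR system with constitutive equation $f_1 V = f_2 I$ is the quadruple $(m_1 - m_2,\, n_1 - n_2,\, c,\, d)$, where $m_i, n_i$ are the smallest and largest degrees of $f_i$ and $c, d \in \{0,1\}$ record whether $f_1$ and $f_2$ alternate. For this quadruple to be ``one of the types'' we need three things: the integer differences must be well-defined small integers; the alternation flags must be well-defined; and every polynomial occurring as one side of the constitutive equation must genuinely fall into either the alternating or the nonalternating class, so that $c$ and $d$ are unambiguously assigned.

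First I would note that $m_1, n_1, m_2, n_2$ are well-defined as soon as $f_1$ and $f_2$ are nonzero, which they always are for a nontrivial LCR network. Then Proposition \ref{prop:LCR_diff_one} immediately forces $n_1 - n_2 \in \{-1,0,1\}$ and $m_1 - m_2 \in \{-1,0,1\}$, handling the first two coordinates of the quadruple.

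Next, for the third and fourth coordinates, I would invoke Proposition \ref{prop:skip_bn_alt}: each side of the constitutive equation of any series-parallel LCR system is either alternating or nonalternating. This dichotomy is exhaustive, so setting $c = 1$ iff $f_1$ alternates and $d = 1$ iff $f_2$ alternates assigns a well-defined binary value in every case. Combined with the degree bound above, this produces a valid quadruple in $\{-1,0,1\}^2 \times \{0,1\}^2$ for every LCR system.

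There is no real obstacle here beyond checking that the dichotomy in Proposition \ref{prop:skip_bn_alt} is truly the right one for Definition \ref{def:type}, i.e., that ``not alternating'' in the sense of $c = 0$ coincides with ``nonalternating'' in the sense of the definition (every coefficient between the extremal degrees being nonzero). This is exactly what Proposition \ref{prop:skip_bn_alt} asserts, ruling out the awkward intermediate case of a polynomial that skips a degree without having the full alternating pattern. Once that point is flagged, the proof is a one-line combination of the two propositions.
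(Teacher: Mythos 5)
Your proof is correct and takes essentially the same approach as the paper: the heart of the matter is that Proposition \ref{prop:skip_bn_alt} rules out a side of the constitutive equation that skips a degree without alternating, so the flags $c,d$ are unambiguous. Your additional appeal to Proposition \ref{prop:LCR_diff_one} for the first two coordinates matches a remark the paper makes just before the corollary, so nothing here diverges from the authors' argument.
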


\begin{proof}
The only way that we could not classify an 
LCR system with a type would be if it had a constitutive equation which 
did not alternate, but also was not saturated by our definition, i.e.~that skipped without alternating.  By Proposition \ref{prop:skip_bn_alt}, this cannot happen.  Thus, every LCR system can be characterized by a type.
\end{proof}

We can now make several more statements about the type characterization we propose for LCR systems.

\begin{prop}\label{prop:type_series}
We can characterize the type of a series combination of two LCR systems of types $(a,b,c,d)$ and $(e,f,g,h)$ respectively as 
\begin{equation}\label{eq:series_type}
(a,b,c,d) \odot (e,f,g,h) = \left( \max\{a,e\}, \ \min\{b,f\},\ cg, \ cdgh(1-||a|-|e||) \right).
\end{equation}
\end{prop}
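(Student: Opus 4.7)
The plan is to apply Proposition~\ref{prop:series_con} directly to obtain the combined constitutive equation
\[
(f_1 f_3) V = (f_1 f_4 + f_2 f_3) I,
\]
and then determine each of the four coordinates of the resulting type separately. Writing $\shape(f_i) = [m_i, n_i]$ so that $a = m_1 - m_2$, $b = n_1 - n_2$, $e = m_3 - m_4$, $f = n_3 - n_4$, I expect the first two coordinates to be straightforward arithmetic with $\min$ and $\max$, and the third to be a short application of Lemma~\ref{lem:prod_nonalt}; the main work will be in the fourth coordinate.

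For the shape entries, $\shape(f_1 f_3) = [m_1 + m_3, n_1 + n_3]$ and $\shape(f_1 f_4 + f_2 f_3) = [\min(m_1 + m_4, m_2 + m_3), \max(n_1 + n_4, n_2 + n_3)]$; no leading or trailing cancellation can occur because every coefficient of every $f_i$ is a positive polynomial in the positive base parameters, and sums of positive terms remain positive. Subtracting gives $\max(a, e)$ and $\min(b, f)$ as the first two coordinates. For the third coordinate, $f_1 f_3$ alternates if and only if both factors do: the forward implication is Lemma~\ref{lem:prod_nonalt} applied contrapositively, and the reverse is immediate because the supports of alternating polynomials each lie in a single parity class, so the support of the product does too. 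This yields the factor $cg$.

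The heart of the proof is the fourth coordinate. If any of $f_1, f_2, f_3, f_4$ fails to alternate, then by Lemma~\ref{lem:prod_nonalt} whichever of $f_1 f_4$ or $f_2 f_3$ contains that factor is itself nonalternating, hence has nonzero coefficients at both parities throughout its range; adding the other polynomial, whose coefficients are non-negative, cannot cancel these, so the sum has nonzero coefficients at both parities and Proposition~\ref{prop:skip_bn_alt} forces it to be nonalternating. This explains the overall factor of $cdgh$. Otherwise, each $f_i$ alternates with a single parity $p_i \equiv m_i \pmod{2}$, and the products $f_1 f_4, f_2 f_3$ lie in parity classes $p_1 + p_4$ and $p_2 + p_3$ respectively. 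The sum alternates precisely when these classes coincide, i.e., when $(p_1 - p_2) + (p_3 - p_4) \equiv 0 \pmod{2}$.

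To finish, I would invoke Proposition~\ref{prop:LCR_diff_one} to conclude $|a|, |e| \in \{0, 1\}$, with $|a|$ detecting whether $p_1 \neq p_2$ and $|e|$ detecting whether $p_3 \neq p_4$. Hence the parity condition becomes $|a| \equiv |e| \pmod{2}$, which on $\{0, 1\}$ is just $|a| = |e|$, equivalently $1 - ||a| - |e|| = 1$; this is exactly the residual factor in the claimed formula. The main obstacle will be the parity bookkeeping in the fourth coordinate together with ensuring that no ``skipping-but-not-alternating'' pathologies appear in the sum; both are controlled by the positivity of the parameters and by Proposition~\ref{prop:skip_bn_alt}.
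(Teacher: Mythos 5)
Your proposal is correct and follows essentially the same route as the paper's proof: apply Proposition~\ref{prop:series_con}, read off the first two type entries from the min/max shape arithmetic, get the third from the fact that a product alternates iff both factors do, and settle the fourth by matching the parity classes of $f_1f_4$ and $f_2f_3$, which via Proposition~\ref{prop:LCR_diff_one} reduces to the condition $|a|=|e|$. Your added remarks on the absence of leading/trailing cancellation (from positivity of the coefficients) are a welcome refinement but do not change the argument.
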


\begin{proof}
Suppose $\mathcal{M}$ is generated by a series combination of two smaller LCR systems $N_1$ and $N_2$ with respective constitutive equations $f_1 V_1 = f_2 I_1$ and $f_3 V_2 = f_4 I_2$.  Note then that the constitutive equation of $\mathcal{M}$ is $f_1f_3 V = (f_1 f_4 + f_2 f_3) I$.  

Let us define each polynomial $f_1,f_2,f_3,$ and $f_4$ just as in Equation \ref{eq:fourfs}.

Note that the first entry in the type of $\mathcal{M}$ is the difference in the smallest orders of both sides of its constitutive equation, i.e.~
\begin{align*}
(m_1+m_3) - \min\{m_1+m_4,\ m_2+m_3\} &= \max\{m_1+m_3-m_1-m_4, \ m_1+m_3-m_2-m_3\} \\
&=\max\{m_3-m_4,\ m_1-m_2\} \\
&= \max\{e,a\}.
\end{align*}

Similarly, the second entry in the type of $\mathcal{M}$ is the difference in the largest orders of both sides of its constitutive equation, i.e.~
\begin{align*}
(n_1+n_3) - \max\{n_1+n_4,\ n_2+n_3\} &= \min\{n_1+n_3-n_1-n_4, \ n_1+n_3-n_2-n_3\} \\
&=\min\{n_3-n_4,\ n_1-n_2\} \\
&= \min\{f,b\}.
\end{align*}

Also, $f_1f_3$ alternates if and only if $f_1$ and $f_3$ both alternate, i.e.~the third entry of the type of $\mathcal{M}$ is 1 if and only if both third entries of the types of $N_1$ and $N_2$ are 1.  This is true exactly when $c=g=1$, equivalently if and only if $cg=1$.

Finally, the right hand side of the constitutive equation of $\mathcal{M}$ alternates if and only if all four of the $f_i$ alternate, and $f_1f_4$ has the same parity as $f_2f_3$.  Note that these two products have the same parity if and only if either all $f_i$ have the same parity, or if $f_1$ and $f_2$ have different parity and $f_3$ and $f_4$ have different parity.  More explicitly, we can consider the smallest order of all of the alternating $f_i$ and note that these two products have the same parity if and only if either each of $a=e=0$, or $|a|=|e|=1$.
Thus, the fourth entry of the type of $\mathcal{M}$ is 1 if and only if all of the third and fourth entries of $N_1$ and $N_2$ are 1 and $||a|-|e||=0$, i.e.~$1-||a|-|e||=1$.
\end{proof}

By circuit duality, we also get a similar formula for parallel combinations.

\begin{prop}\label{prop:types_para}
We can characterize the type of a parallel combination of two LCR systems of types $(a,b,c,d)$ and $(e,f,g,h)$ respectively as 
\begin{equation}\label{eq:para_type}
(a,b,c,d) \oplus (e,f,g,h) = \left( \min\{a,e\}, \ \max\{b,f\},\  cdgh(1-||a|-|e||), \ dh \right).
\end{equation}
\end{prop}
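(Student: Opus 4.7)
The plan is to derive Proposition \ref{prop:types_para} from the series case (Proposition \ref{prop:type_series}) by invoking the circuit duality of Theorem \ref{thm:duality} and Proposition \ref{prop:projectivedualcons}. The key observation is that the parallel combination $M_1 \vee M_2$ is dual to the series combination $\overline{M}_1 \wedge \overline{M}_2$, so once I understand how duality transforms types, I can apply the series formula $\odot$ to the dualized inputs and dualize the answer back.

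First I would record how duality acts on types. By Proposition \ref{prop:projectivedualcons}, if $M$ has constitutive equation $f_1 V = f_2 I$ with $f_1$ of shape $[m_1,n_1]$ and $f_2$ of shape $[m_2,n_2]$, then $\overline{M}$ has constitutive equation in which the roles of $f_1$ and $f_2$ are swapped (together with a relabeling of parameters that preserves which powers of $d/dt$ carry nonzero coefficients). Consequently, if $M$ has type $(a,b,c,d)$, then $\overline{M}$ has type $(-a,-b,d,c)$.

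Next I would apply $\odot$ to $(-a,-b,d,c)$ and $(-e,-f,h,g)$ using Proposition \ref{prop:type_series}:
\[
(-a,-b,d,c) \odot (-e,-f,h,g) = \bigl(\max\{-a,-e\},\ \min\{-b,-f\},\ dh,\ cdgh(1-||{-a}|-|{-e}||)\bigr),
\]
and then dualize the resulting quadruple via $(p,q,r,s) \mapsto (-p,-q,s,r)$. Using $\max\{-a,-e\} = -\min\{a,e\}$, $\min\{-b,-f\} = -\max\{b,f\}$, and $|{-a}|=|a|$, the dualized output is exactly the right-hand side of (\ref{eq:para_type}).

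The main subtlety is verifying that duality really does act on types by the rule $(a,b,c,d) \mapsto (-a,-b,d,c)$. The shape-swap statement is immediate from Proposition \ref{prop:projectivedualcons}, and the alternating property transports unchanged because alternation is only a statement about which powers of $d/dt$ appear, which is unaffected by the parameter relabeling in the duality. As an independent check I would also do a direct computation in the style of Proposition \ref{prop:type_series}, starting from the parallel constitutive equation $(f_1 f_4 + f_2 f_3)V = f_2 f_4 I$ of Proposition \ref{prop:para_con}: the first two entries then come from matching smallest and largest degrees across the two sides (yielding $\min\{a,e\}$ and $\max\{b,f\}$); the fourth entry is $\mathrm{alt}(f_2 f_4)=dh$ since a product of alternating polynomials alternates; and the third entry requires that all four factors alternate \emph{and} that $f_1 f_4$ and $f_2 f_3$ have matching parity, which is precisely the condition $cdgh(1-||a|-|e||)=1$.
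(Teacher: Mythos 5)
Your proposal is correct and takes essentially the same route as the paper, which derives the parallel formula from the series formula via circuit duality (the paper states this in one line without spelling out the details). Your explicit description of how duality acts on types, $(a,b,c,d)\mapsto(-a,-b,d,c)$, together with the back-substitution into $\odot$ and the independent direct check from $(f_1f_4+f_2f_3)V=f_2f_4I$, correctly fills in what the paper leaves implicit.
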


Given our type characterization and the restrictions imposed on the type by Propositions \ref{prop:LCR_diff_one} and \ref{prop:skip_bn_alt}, there are 36 possible types of the form $\left( \{-1,0,1\}, \ \{-1,0,1\}, \ \{0,1\},\ \{0,1\} \right)$.  Note however that not all 36 of these quadruples correspond to types of LCR systems which are generated by series and parallel combinations of the three base elements.  We can generate all possible types by implementing a recursive algorithm starting with a generating set consisting of the three base element types $(0,0,1,1)$, $(-1,-1,1,1)$, and $(1,1,1,1)$, generating every possible combination of these types, and adding these combinations to the generating set.  Repeating this process until no new quadruples are added to the generating set, we then have all possible types.

 
\begin{prop} The following $22$ quadruples are the only possible LCR types:
\begin{center}
\begin{tabular}{cccccc}
$(1, 0, 0, 0)$, 
&$(-1, 0, 0, 0)$,
&$(0, 0, 1, 1)$,
&$(1, -1, 1, 0)$,
&$(0, 1, 0, 1)$,
&$(0, -1, 0, 0)$,
\\$(1, 1, 0, 0)$,
&$(-1, 1, 1, 1)$,
&$(1, 1, 1, 1)$,
&$(0, 0, 0, 1)$,
&$(0, 0, 1, 0)$,
&$(-1, -1, 0, 0)$,
\\$(1, -1, 0, 0)$,
&$(0, 1, 0, 0)$,
&$(-1, 1, 0, 1)$,
&$(0, -1, 1, 0)$,
&$(0, 0, 0, 0)$,
&$(-1, -1, 1, 1)$,
\\&$(1, -1, 1, 1)$,
&$(1, 0, 1, 0)$,
&$(-1, 0, 0, 1)$,
&$(-1, 1, 0, 0)$.
\end{tabular}
\end{center}

\end{prop}

To conclude this section, we give an example that shows that type analysis, as
was performed to analyze the two component systems, is not sufficient to characterize the
identifiability of general series-parallel LCR systems.

\begin{eg}
Consider the model $M = (R_1 \vee C) \wedge (R_2 \vee L)$.
This model has constitutive equation
\[
R_1 L \ddot{V}  + (CL + R_1R_2) \dot{V} + CR_2 V  =  L R_1 R_2 \ddot{I} + (LCR_1 + LCR_2) \dot{I} + CR_1R_2 I. 
\]
This model is saturated on both sides, and the shapes of the differential operators are
$[0,2]$ and $[0,2]$.  Since the constitutive equation
has the same highest and lowest order on both sides,
this model has type $(0,0,0,0)$.  

Now consider the model $N = M \wedge R_3$ obtained by joining
$M$ in series to a new resistor $R_3$.  The model of a single resistor has type $(0,0,1,1)$,
so the model $N$ will also have type $(0,0,0,0) \odot (0,0,1,1) = (0,0,0,0)$,
and the differential operators also have shapes $[0,2]$ and $[0,2]$.
In this case there are five parameters and five non-monic coefficients, and
a direct calculation shows that the model is locally identifiable.

Finally, consider the new model $N' = N \wedge R_4$ obtained by joining
$N$ in series to a new resistor $R_4$.   Again this model $N'$ will have type
$(0,0,0,0) \odot (0,0,1,1) = (0,0,0,0)$ and the differential operators 
also have shapes $[0,2]$ and $[0,2]$.  But now the model cannot be identifiable
because there are six parameters and there continue to be only five non-monic coefficients.
This shows that the combinations of types $(0,0,0,0) \odot (0,0,1,1)$
may or may not be identifiable depending on the structure of the underlying model.
\end{eg}


\section{Equations Defining LCR Models}\label{sec:equations}

General LCR models can have more non-monic coefficients than
the number of parameters.  Hence, the set of constitutive equations consistent with
a particular model $M$ will be a subset of all possible differential equations
of a given type.  Understanding the algebra and geometry of these
sets of constitutive equations is an interesting problem, and might be useful
for addressing identifiability questions for general LCR circuit systems.

\begin{eg}\label{ex:det}
Consider the LCR system $M = (R \vee C) \wedge L$.
The constitutive equation in this case is
\[
R \dot{V} + V  =   RL  \ddot{I} + L \dot{I} + RC I.
\]
Note that there are three parameters and four non-monic coefficients.
Hence, not every constitutive equation of shape
\[
c_1 \dot{V} + c_0 V  =    d_2  \ddot{I} + d_1 \dot{I} + d_0 I
\]
with positive coefficients can arise from some choice of $R, C, L$.
To describe the relations that arise, we find it useful to work in the projective
representation, as this will produce homogeneous equations.  In this case,
the projective version of the constitutive equation is
\[
L_0C_0R_1 \dot{V} + L_0C_0R_0 V  =   L_1C_0R_1  \ddot{I} + L_1C_0R_0 \dot{I} + L_0C_1R_1 I.
\]
Note that these coefficients satisfy the relation:  $c_1 d_1 = c_0 d_2$.  
\end{eg}

\begin{eg}\label{ex:res}
Consider the four element model $M = (R_1 \wedge C) \vee (R_2 \wedge L)$.  The
constitutive equation is
\[
R_1 L \ddot{V} + (CL + R_1 R_2) \dot{V} + CR_2 V = 
R_1R_2L \ddot{I} + (C R_2 L  + C R_1 L) \dot{I} + R_1 R_2 C I.
\]
There are six coefficients and four parameters.  In the projective version, we expect a single 
homogeneous equation that defines the relations on the coefficients.
It is
\[
c_0^2 d_2^2 - c_1c_0d_2d_1 + c_2c_0d_1^2 + \underline{2c_2c_0d_2d_0} - c_2c_1d_1d_0 + c_2^2 d_0^2 =0.
\]
This polynomial is remarkably similar looking to the resultant of
the two quadratic polynomials $c_2 x^2 + c_1 x + c_0$ and
$d_2 x^2 + d_1 x + d_0$.  However, the sign of the underlined term is wrong.
It is unclear if this polynomial can be expressed as the resultant of related polynomials.
We also do not know if every $6$-tuple $(c_2, c_1, c_0, d_2, d_1, d_0) $ of positive
numbers that satisfies this equation can come from some choice of positive values for
$C, L, R_1, $ and $R_2$.
\end{eg}

Examples \ref{ex:det} and \ref{ex:res} just give a small taste of the types of
equations that can arise.  We do not have a general theory of what those equations
should look like, but we can try to derive properties of the ideals in the hopes
of understanding their structure.

In general, associated to any series-parallel model $M$ is a homogeneous ideal
\[
I_M \subseteq \R[{\bf c}, {\bf d}] = \R[c_0, c_1, \ldots, c_m, d_0, d_1, \ldots, d_m].
\]
For example, in Example \ref{ex:det}, we get that $I_M = \langle c_1 d_1 - c_0 d_2 \rangle$.
In fact, beyond being just an ordinary homogeneous ideal, $I_M$ satisfies some
other homogeneities as well.

Call a polynomial $p(c,d) \in \R[{\bf c},{\bf d}]$ bihomogeneous, if it is homogeneous in each
set of variables, that is $p(\lambda c, \delta d)  =  \lambda^m \delta^n p(c,d)$ for some
$m$ and $n$.  The pair $(m,n)$ is called the bidegree of $p$.
An ideal $I \in \R[{\bf c},{\bf d}]$ is bihomogeneous if it has a generating set consisting
of bihomogeneous polynomials.  The notion of bihomogeneity of ideals also can be interpreted
naturally in terms of the corresponding variety, at least when $I$ is radical.
Let $V = V(I) \subseteq  \R^{2n + 2}$ be the corresponding variety of
pairs $(\bfc, \bfd)$ coming from the model.  
Bihomogeneity of the radical ideal $I \subseteq \R[\bfc, \bfd]$  
is equivalent  to the following condition on the variety $V =  V(I)$:
for any pair $(\bfc, \bfd) \in V$ and any nonzero $\lambda, \delta \in \C$,  
$(\lambda \bfc, \delta \bfd)$ is also
in $V$.

\begin{prop}\label{prop:bihomogeneous}
For any series-parallel circuit network $M$, the vanishing ideal $I_M$ is bihomogeneous
in ${\bf c}$ and ${\bf d}$.  
\end{prop}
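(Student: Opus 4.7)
The plan is to show that the variety $V(I_M) \subseteq \C^{2m+2}$ is invariant under the $(\C^*)^2$-action $(\bfc, \bfd) \mapsto (\lambda \bfc, \delta \bfd)$, which the excerpt identifies with bihomogeneity of $I_M$. I will work in the projective representation of Section~\ref{sec:proj}. Call each of the parameters $R_{0,i}, L_{0,i}, C_{0,i}$ a \emph{blue} parameter and each of $R_i, L_i, C_i$ a \emph{red} parameter; the basic constitutive equation of any base element then has exactly one blue parameter on the $V$-side and one red parameter on the $I$-side. Writing the constitutive equation of $M$ as $f_1 V = f_2 I$ as in Proposition~\ref{prop:projectivedualcons}, the coefficient vectors $\bfc$ and $\bfd$ record the time-derivative coefficients of $f_1$ and $f_2$ respectively.

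The key structural claim is the following: for any series-parallel circuit $M$ with $k$ base elements, every monomial appearing in any entry of $\bfc$ is a product of exactly one projective parameter from each of the $k$ base elements, and the number of blue (respectively red) parameters used is the same constant $b_V(M)$ (respectively $r_V(M) = k - b_V(M)$) across all monomials of all entries. The analogous statement holds for $\bfd$ with constants $(b_I(M), r_I(M))$; moreover $b_V(M) = b_I(M) + 1$ and $r_I(M) = r_V(M) + 1$. I would prove this by induction on $k$ using Propositions~\ref{prop:series_con} and~\ref{prop:para_con}. The base case of a single base element is immediate. For a series combination $M = M_1 \wedge M_2$, the product $f_1 = f_1(M_1) f_1(M_2)$ plainly has the claimed bidegree. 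The subtlety is in the sum $f_2 = f_1(M_1) f_2(M_2) + f_2(M_1) f_1(M_2)$: the two summands share the same (blue, red)-bidegree precisely because of the inductive invariants $b_V(M_i) - b_I(M_i) = 1$ and $r_I(M_i) - r_V(M_i) = 1$. The parallel case is handled dually.

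Granted the claim, simultaneously scaling every blue parameter by $\alpha \in \C^*$ and every red parameter by $\beta \in \C^*$ sends $(\bfc, \bfd)$ to $(\alpha^{b_V}\beta^{r_V}\bfc,\, \alpha^{b_I}\beta^{r_I}\bfd)$. The exponent matrix $\bigl(\begin{smallmatrix} b_V & r_V \\ b_I & r_I \end{smallmatrix}\bigr)$ has determinant $b_V r_I - r_V b_I = (b_I+1)(r_V+1) - r_V b_I = b_I + r_V + 1 \geq 1$, so the homomorphism $(\C^*)^2 \to (\C^*)^2$ defined by $(\alpha, \beta) \mapsto (\alpha^{b_V}\beta^{r_V}, \alpha^{b_I}\beta^{r_I})$ is surjective. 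Hence any rescaling $(\bfc, \bfd) \mapsto (\lambda\bfc, \delta\bfd)$ is realized by some parameter rescaling, so the image of the parameterization map is closed under the full $(\C^*)^2$-action. Its Zariski closure $V(I_M)$ therefore is as well, and $I_M$ is bihomogeneous.

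The main obstacle is the inductive verification of the structural claim---specifically, showing that the two summands in the series/parallel recursion have matching bidegrees so that the sum remains bihomogeneous. This is exactly what forces one to carry the linked invariants $b_V - b_I = 1$ and $r_I - r_V = 1$ through the induction rather than tracking only a single bidegree for each of $f_1$ and $f_2$.
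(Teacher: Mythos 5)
Your argument is correct, but it reaches the conclusion by a genuinely different route than the paper. Both proofs are inductions on the series-parallel structure, and both correctly locate the crux in the compatibility of the two summands $f_1f_4$ and $f_2f_3$ under rescaling; the difference is where the scaling is absorbed. The paper works directly with the recursive set description $M = \{ (\bfc \ast \bfc', \bfc \ast \bfd' + \bfc' \ast \bfd) \}$, takes as inductive hypothesis that the image sets $M_1, M_2$ are themselves invariant under independent scalings of their two coefficient vectors, and realizes an arbitrary target scaling $(\lambda,\delta)$ of the combined system by choosing the scalings $(\lambda,\delta)$ on $M_1$ and $(1,\delta/\lambda)$ on $M_2$ --- a two-line computation that needs nothing about the coefficient polynomials beyond the convolution recursion. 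You instead push the scaling all the way down to the base elements: you set up the blue/red bigrading on the projective parameters, prove by induction that the coefficient polynomials are multilinear (one parameter per component per monomial) and bihomogeneous with the linked bidegrees $b_V = b_I + 1$ and $r_I = r_V + 1$ (this linkage is exactly what makes the two summands in the series and parallel recursions agree in bidegree, as you note), and then conclude via surjectivity of the monomial torus map with exponent determinant $b_I + r_V + 1 \neq 0$. Your route is longer and requires the extra torus-surjectivity step, but it buys explicit structural information that the paper's argument never makes visible --- the multidegrees of the coefficient polynomials and the one-parameter-per-component multilinearity --- whereas the paper's scalar-absorption trick is shorter and stays entirely at the level of the image sets. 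Both proofs implicitly work with the projective parameterization (the paper's base case ``the vanishing ideal is the zero ideal'' only holds there), and you are explicit about this, so there is no gap on that point.
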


\begin{proof}
We proceed by induction on the number of components in the network.  
The statement is clearly true if the networks have just one component, since the vanishing
ideal is the zero ideal in that case.

By symmetry, we can suppose that the model is a series combination $M = M_1 \wedge M_2$.
By induction, we can suppose that $M_1$ and $M_2$ satisfy the bihomogeneity assumption.
For two sequences $\bfc = (c_0, c_1,c_2, \ldots )$ and $\bfd = (d_0, d_1, d_2, \ldots)$
let $\bfc \ast \bfd$ denote the convolution
\[
\bfc \ast \bfd =  (c_0d_0, c_1d_0 + c_0 d_1, c_2d_0 + c_1d_1 + c_0d_2, \ldots). 
\]
Then, with this operation defined, we have that
\[
M  = \{ (\bfc \ast \bfc',  \bfc \ast \bfd' + \bfc' \ast \bfd) :  (\bfc, \bfd) \in M_1, (\bfc', \bfd') \in M_2 \}. 
\]
So, we need to show that if $(\bfc \ast \bfc',  \bfc \ast \bfd' + \bfc' \ast \bfd) \in M$ 
and if $\lambda, \delta \in \C^\ast$ then $(\lambda(\bfc \ast \bfc'),  
\delta (\bfc \ast \bfd' + \bfc' \ast \bfd) ) \in M$.  But by the inductive hypothesis, we know that if $(\bfc, \bfd) \in M_1$, 
$(\bfc', \bfd') \in M_2$, and $\lambda, \delta, \lambda', \delta'$ are nonzero then
\[
( \lambda \lambda' \bfc \ast \bfc',  \lambda \delta' \bfc \ast \bfd' + \lambda' \delta \bfc' \ast \bfd)
\in M.
\]
Taking $\lambda = \lambda$, $\lambda' = 1$, $\delta = \delta$ and $\delta' = \delta/\lambda$
gives the desired result.
\end{proof}

A second type of homogeneity also holds for the vanishing ideals of 
circuit models.  We introduce a grading on the polynomial ring $\R[{\bf c}, {\bf d}]$
by setting $\deg(c_i) = \deg(d_i) = i$.
The degree of a monomial $\deg(c^\alpha d^\beta)$ is the sum of the degrees
of all variables in the monomial, counted with multiplicity.  So, for example,
\[
\deg(c_1^2 c_3 d_0d_4) = 1 + 1 + 3 + 0 + 4 = 9.
\]  
A polynomial in $\R[{\bf c}, {\bf d}]$ in the degree grading is called
homogeneous if every monomial appearing has the same degree.  An ideal
$I_M$ is degree homogeneous if it has a generating set consisting of degree homogeneous
polynomials.
The notion of degree homogeneity can also be interpreted in terms 
of the corresponding variety, at least when the ideal is radical.
Degree homogeneity of the radical ideal $I \subseteq \R[\bfc, \bfd]$  
is equivalent  to the following condition on the variety $V =  V(I)$:
for any pair $(\bfc, \bfd) \in V$ and any nonzero $\lambda \in \C$,  
$(\lambda^0 c_0, \lambda^1 c_1, \lambda^2 c_2, \ldots,  
\lambda^0 d_0, \lambda^1 d_1, \lambda^2 d_2,)$ is also
in $V$.  Denote the operation of applying $\lambda$ to $(\bfc, \bfd)$ in this
way by $\lambda \cdot (\bfc, \bfd) =  (\lambda \cdot \bfc, \lambda \cdot \bfd)$.  

\begin{prop}
For any series-parallel circuit network $M$, the vanishing ideal $I_M$ is degree homogeneous
in ${\bf c}$ and ${\bf d}$.  
\end{prop}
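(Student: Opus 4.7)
The plan is to mirror the inductive argument used for Proposition \ref{prop:bihomogeneous}, replacing the two independent rescalings of $\bfc$ and $\bfd$ by a single $\C^\ast$-action in which the degree-$k$ coordinates $c_k$ and $d_k$ are scaled by $\lambda^k$. The key observation I would exploit is that this action on coefficients is exactly the one induced by the polynomial substitution $x \mapsto \lambda x$: if $f(x) = \sum a_k x^k$, then $f(\lambda x) = \sum \lambda^k a_k x^k$. Interpreted at the level of differential operators, this is the substitution $\tfrac{d}{dt} \mapsto \lambda \tfrac{d}{dt}$, which preserves the algebraic structure of the constitutive equations. Using the variety criterion for degree homogeneity stated just before the proposition, it then suffices to show that $V_M \subseteq \C^{2n+2}$ is closed under $\lambda \cdot (\bfc,\bfd)$.

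First I would check the three base cases directly. For the single resistor with projective constitutive equation $R_0 V = R_1 I$, both coefficients sit in degree $0$, so the action is trivial and $V_M$ is certainly invariant. For the inductor $L_0 V = L_1 \dot I$, the action sends the coefficient vector $(L_0,L_1)$ to $(L_0, \lambda L_1)$, which corresponds to another inductor with parameter $\lambda L_1$. Likewise for the capacitor $C_0 \dot V = C_1 I$, the action rescales $C_0$ to $\lambda C_0$, producing another valid capacitor. Thus $V_M$ is invariant for each base element.

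For the inductive step, suppose $M = M_1 \wedge M_2$; the parallel case is analogous by the circuit duality of Theorem \ref{thm:duality}. Let $(f_1, f_2)$ and $(f_3, f_4)$ be the pairs of differential operators produced by $M_1$ and $M_2$, so by Proposition \ref{prop:series_con} the constitutive equation of $M$ is determined by the pair $(f_1 f_3,\, f_1 f_4 + f_2 f_3)$. Because $f(x) \mapsto f(\lambda x)$ is a ring endomorphism, it commutes with both multiplication and addition, giving $(f_1 f_3)(\lambda x) = f_1(\lambda x)\, f_3(\lambda x)$ and $(f_1 f_4 + f_2 f_3)(\lambda x) = f_1(\lambda x) f_4(\lambda x) + f_2(\lambda x) f_3(\lambda x)$. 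Hence if $(\bfc,\bfd) \in V_M$ arises from $(\bfc^1,\bfd^1) \in V_{M_1}$ and $(\bfc^2,\bfd^2) \in V_{M_2}$, then $\lambda \cdot (\bfc,\bfd)$ arises from $\lambda \cdot (\bfc^1,\bfd^1)$ and $\lambda \cdot (\bfc^2,\bfd^2)$. By the inductive hypothesis these lie in $V_{M_1}$ and $V_{M_2}$, so $\lambda \cdot (\bfc,\bfd) \in V_M$, completing the induction.

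I do not foresee a serious obstacle. The main conceptual step is recognizing that the degree grading with $\deg(c_i) = \deg(d_i) = i$ is exactly the grading adapted to the natural $\C^\ast$-action on linear differential operators given by rescaling $\tfrac{d}{dt}$; once this is identified, the induction proceeds cleanly because the series and parallel composition rules of Propositions \ref{prop:series_con} and \ref{prop:para_con} are built purely from polynomial multiplication and addition, both of which are equivariant under $x \mapsto \lambda x$.
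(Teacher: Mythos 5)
Your proof is correct and follows essentially the same route as the paper: an induction on the number of components in which the key point is that the weighted scaling $\lambda\cdot(\bfc,\bfd)$ commutes with the convolution and sum operations defining series and parallel combination. Your identification of this action with the ring endomorphism $x \mapsto \lambda x$ is just a conceptual repackaging of the paper's identity $(\lambda \cdot \bfc) \ast (\lambda \cdot \bfc') = \lambda \cdot (\bfc \ast \bfc')$, so the two arguments coincide in substance.
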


\begin{proof}
We proceed by induction on the number of components in the network.  
The statement is clearly true if the networks have just one component, since the vanishing
ideal is the zero ideal in that case.

By symmetry, we can suppose that the model is a series combination $M = M_1 \wedge M_2$.
By induction, we can suppose that $M_1$ and $M_2$ satisfy the degree homogeneity assumption.
As in the proof of Proposition \ref{prop:bihomogeneous}, we need to show that
if $(\bfc \ast \bfc',  \bfc \ast \bfd' + \bfc'  \ast \bfd) \in M$
and $\lambda \in \C^*$ then $(\lambda  \cdot (\bfc \ast \bfc'),  \lambda \cdot (\bfc  \ast \bfd' + \bfc' \ast \bfd) \in M$.  Note that $\cdot$ and $\ast$ interact in the following way: 
\[
(\lambda \cdot \bfc) \ast (\lambda \cdot \bfc') =  \lambda \cdot (\bfc \ast \bfc')
\]
with similar expressions holding for other combinations of $\bfc, \bfd, \bfc', \bfd'$.  
Since
$\lambda \cdot (\bfc, \bfd) = 
(\lambda \cdot \bfc, \lambda \cdot \bfd) \in M_1$ and $\lambda \cdot (\bfc', \bfd') =
(\lambda \cdot \bfc', \lambda \cdot \bfd') \in M_2$
we get that
\[
((\lambda \cdot \bfc ) \ast   (\lambda \cdot \bfc'), (\lambda \cdot \bfc ) \ast   (\lambda \cdot \bfd') + (\lambda \cdot \bfc' ) \ast   (\lambda \cdot \bfd)  ) =  
 (\lambda  \cdot (\bfc \ast \bfc'),  \lambda \cdot (\bfc  \ast \bfd' + \bfc' \ast \bfd) \in M
\]
which is the desired result.
\end{proof}


\section{Future Work and Discussion}\label{sec:future}
In general, determining local identifiability of LCR systems appears to require novel techniques as compared to those of the two base element subsystems.  Due to the lack of an upper bound on the number of coefficients of the constitutive equation by the number of parameters, it seems that the problem of determining identifiability of LCR systems must extend past the counting of parameters and coefficients.  

Another approach to studying identifiability via a single constitutive equation is to consider the generation of the coefficients themselves, and consider the Jacobian of the map from the parameter space to these coefficients \cite{MeshkatSullivant}.  In the case of the LCR system, this initially seems difficult due to the lack of an obvious pattern in the coefficients based on the graph structure, though more study could be done on this subject.  Perhaps the electromechanical analogy of the LCR system into a mechanical system could lead to a more natural study of the coefficients of the related constitutive equation.


\section*{Acknowledgments}

Cashous Bortner and Seth Sullivant were partially supported
by the NSF (DMS 1615660).

\bibliographystyle{plain}
\bibliography{LCR}

\end{document}